\newtheorem{teorema}{Theorem}[section]
\newtheorem*{theorem*}{Structure Theorem}
\newtheorem{lemma}[teorema]{Lemma}
\newtheorem{propos}[teorema]{Proposition}
\newtheorem{corol}[teorema]{Corollary}
\theoremstyle{definition}
\newtheorem{rem}[teorema]{Remark}
\newtheorem{defin}[teorema]{Definition}
\def\R{{\mathbb R}}
\def\C{{\mathbb C}}
\def\P{{\mathbb P}}
\newcommand{\Ci}{\mathcal{C}}
\def\Psh{{\rm Psh}}
\DeclareMathOperator{\spt}{spt}
\def\oli{\overline}				
\def\de{\partial}
\def\debar{\oli{\de}}
\title[On Minimal kernels and Levi currents]{On  Minimal kernels and Levi currents\\ on weakly
complete complex  manifolds}
\author[F.~Bianchi]{Fabrizio Bianchi\textsuperscript{1}}
\address{\textsuperscript{1}CNRS,  Univ.\ Lille, UMR 8524 - Laboratoire Paul Painlev\'e, F-59000 Lille, France}
\email{fabrizio.bianchi@univ-lille.fr}
\author[S.~Mongodi]{Samuele Mongodi\textsuperscript{2}}
\address{\textsuperscript{2}Politecnico di Milano, Dipartimento di Matematica, Via Bonardi, 9 -- I-20133 Milano, Italy}
\email{samuele.mongodi@polimi.it}
  \date{\today}
\subjclass[2010]{32C40, 32E05, 32U10}
\begin{document}
\begin{abstract}
A  complex
manifold
$X$ is \emph{weakly complete}
 if it admits a continuous
plurisubharmonic
 exhaustion function $\phi$.
 The minimal kernels $\Sigma_X^k, k \in [0,\infty]$
 (the loci where are all $\Ci^k$  plurisubharmonic 
 exhaustion functions fail to be strictly plurisubharmonic),
introduced by Slodkowski-Tomassini,
and the Levi currents, introduced by Sibony, are both concepts
aimed at measuring
how far $X$ is from being Stein. 
We compare these
notions, prove that all Levi currents
are supported by all  the $\Sigma_X^k$'s,
 and give sufficient conditions for points in $\Sigma_X^k$ to be 
 in the support of some Levi current. 

 When $X$  is a surface and
   $\phi$ can be chosen analytic, 
building on previous work by the second author, Slodkowski, and Tomassini,
we prove the existence of a Levi current precisely
supported on $\Sigma_X^\infty$, and give a classification of
Levi currents on $X$. In particular,
unless $X$ is a modification of a Stein space, every point in
$X$ is in the support of some Levi current.
\end{abstract}

\maketitle

\section{Introduction}

Given an abstract (and possibly very complicated) manifold, a
natural question is whether it is possible to see 
it as a subset of a simpler space. In the real category, a fundamental
theorem by Nash states that this is always possible, and in a very strong
sense: every Riemannian manifold can be isometrically embedded in some $\mathbb{R}^N$.
When moving to the complex category, we can then ask
the following natural question: is it possible to embed any complex
manifold in some $\mathbb{C}^N$, by means of a holomorphic map?
We call \emph{Stein} a manifold for which the above holds true. This
time, the rigidity of holomorphic functions readily provides negative
examples: for instance, the maximum principle implies that any holomorphic
map on a compact complex manifold must be constant, and thus the manifold
cannot be Stein. A central question is then to understand
when
a given complex manifold is
Stein.
More specifically, given a dimension $n$, 
one would like to 
understand the \emph{obstructions} for an $n$-dimensional manifold to be Stein.

A major advance in this direction was provided
by
 Grauert \cite{Gra}: 
a complex manifold is Stein if and only if it admits a
$\Ci^2$
strictly plurisubharmonic
(psh for short) exhaustion function.
The $\Ci^2$ assumption was relaxed to $\Ci^0$ by
Narasimhan \cites{Na1,Na2}. 
In view of these results, 
it is natural to tackle the question
 by studying the positive cone 
 $\Psh_e^0(X)$
  of all continuous 
  psh exhaustion functions on
 $X$ (or more generally the cone
 $\Psh_e^k (X) := \Psh_e^0(X) \cap \mathcal{C}^k$ for some 
 $k \in [0,\infty]$, 
 and in particular to find obstructions for them to be strictly psh.
 As a rough idea, such obstructions must correspond to the presence of some sets in $X$ along
 which all continuous 
 psh functions must necessarily be pluriharmonic. As a prototypical example,
 the blow-up of a point and its corresponding exceptional divisor give precisely this kind of obstruction.

A precise study of this kind of phenomena was started
by Slodkowski
and Tomassini
in \cite{ST} in the setting of \emph{weakly complete complex manifolds}, i.e., manifolds admitting a 
continuous
psh exhaustion function.
 A crucial definition
 is the following: 
for $k\in [0,\infty]$
  the \emph{minimal kernel} of a manifold $X$ 
(with respect to $\Psh^k_e$) is 
 \begin{equation}\label{eq:sigmaX}
 \Sigma_X^k :=\{x\in X\ :\ i\de\debar u\textrm{ is degenerate at }x\, \forall u\in \Psh^k_e(X)\},
 \end{equation}
 i.e., the subset of $X$ where
 no  element of  $\Psh^k_e$ 
can be strictly psh. A key
 result  of \cite{ST} is that,
 whenever $\Psh_e^k (X)$ is not empty,
  there actually
  exists  a function $\phi_0\in \Psh^k_e (X)$
  (called \emph{minimal}) 
  which fails to be strictly psh \emph{precisely}
   on 
   the minimal kernel
$\Sigma_X^k$.
  Moreover, the minimal kernels are
   \emph{local maximum sets} (see Definition \ref{defi_localmax}). 
Some finer properties are also established (some requiring 
  at least the $\Ci^2$ regularity, see for instance \cite{ST}*{Theorem 3.9}). 
  Observe that the $\Sigma_X^k$'s are increasing in $k$, but it is not known whether
  equalities should occur 
  in general,
  see for instance
  \cite[Section 5.10]{Sl_pseudo}.

In \cites{mst,mst2}, the second author, Slodkowski, and Tomassini
showed that, if $X$ has complex dimension $2$ 
and $\Psh_e^\infty$
contains at least one real analytic function,
 the minimal kernel is either a union of
countably many compact (and negative) curves
or equal to the whole manifold, by
giving a full classification of the possible structures that such a manifold can present.
An important point here is that, although in general the
minimal kernel does not have a priori an analytic structure,
however its intersection with any level of a psh exhaustion function
 does (at least in dimension $2$).

In \cite{Sib}, Sibony introduced the notion of \emph{Levi current} (see Definition \ref{def_Levic}),
which is related to the (non-)existence of strictly psh functions on a complex manifold and thus to
the problem of determining whether a given
manifold is Stein, see also
\cite{OS, Sib_pf, Sib_pseudo}.
Extremal Levi-currents are supported
on sets where all 
continuous psh
 functions are constant.
In the case of infinitesimally homogeneous manifolds, a
foliation is constructed and linked to the obstructions to Steinness.

Our goal here is
to
compare these two approaches,
and in particular  to
use the notion of Levi current on $X$
 to study the analytic structure of
the minimal kernels $\Sigma_X^k$. 
 In order to do this, let
 us denote by $\Psh^k$, for $0\leq k\leq \infty$,
 the cone of $\Ci^k$ psh function on $X$ and
 define  the \emph{distribution}
$\mathcal E^k$ in $TX$ as
 \begin{equation}\label{eq:e} 
\mathcal{E}^{k}:=\{(x,v)\in TX\ :\ (d\varphi)_x(v)=0\ \forall\varphi \in \Psh^{k} (X) \}.
 \end{equation}
A \emph{distribution} is a subset of $TX$ whose intersection with 
$T_xX$ is a (real) vector subspace of the latter for every $x\in X$.
 In general, $\mathcal{E}^k$ will not be a subbundle of $TX$, as $\dim\mathcal{E}^k_x$ is not constant;
  however it is a closed subset of $TX$, hence the function $x\mapsto \dim \mathcal{E}^k_x$ is upper semicontinuous.

The following is our main result.

\begin{teorema}\label{t:main}
Let $X$ be a weakly complete complex
manifold
 and $T$ a Levi current on $X$. 
Denote by $\Sigma_X^k$ the minimal kernels of $X$ and by $F$ the union 
of the supports of all Levi currents on $X$.
Then $F \subseteq \Sigma_X^k$ for all $k\geq 0$
and
\begin{enumerate}
\item\label{item_t_locmax} if $T$ has compact support $K_T$, then $K_T$ is a local maximum set;
\item\label{item_t_KY}  if 
$K$
 is a local maximum set, then
 there exists a Levi current
supported on 
 $K$. In particular, $K \cap F \neq \emptyset$.
 \end{enumerate}
 Moreover, if $X$ is a surface,
$\phi \in \Ci^k$
a psh exhaustion function,  and
 $Y$ 
 a regular connected component of a level set $\{\phi=c\}$,
 \begin{enumerate}
 \setcounter{enumi}{2}
  \item\label{item_t_curves} if $4\leq k\leq \infty$ and  $U\subseteq \Sigma_X^k$
is an open set in $Y$   and there exists $x\in U$
 such that $\dim\mathcal{E}^k_x=2$,
  then $X$
is a union of compact complex curves. In particular, $F=\Sigma_X^k =X$;
\item\label{item_t_Y} if 
$2 \leq k \leq \infty$  and $Y \subseteq \Sigma_X^k$, then there exists $c'<c$
 such that 
 the connected component of $\phi^{-1} ([c',c])$ containing $Y$ is contained in $F$.
  In particular, $Y \subseteq F$.
\end{enumerate}
\end{teorema}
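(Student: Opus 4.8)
The plan is to deal with the inclusion $F\subseteq\Sigma^k_X$ and with each of the four items separately. For the inclusion, I would use that a Levi current is in particular a positive current $T$ of bidimension $(1,1)$ with $i\de\debar T=0$ and $i\de\debar u\wedge T=0$ for every continuous psh function $u$ on $X$. Fix $x\in\spt T$; if some $u\in\Psh^k_e(X)$ were nondegenerate at $x$, then $i\de\debar u\ge\varepsilon\omega$ near $x$ for a Hermitian form $\omega$ and $\varepsilon>0$, hence $i\de\debar u\wedge T\ge\varepsilon\,\omega\wedge T$ there; but $\omega\wedge T$ is the trace measure of $T$, which charges every neighbourhood of $x\in\spt T$, contradicting $i\de\debar u\wedge T=0$. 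Thus every $u\in\Psh^k_e(X)$ is degenerate at $x$, i.e.\ $x\in\Sigma^k_X$; as this holds for every $k$ and every Levi current, $F\subseteq\bigcap_k\Sigma^k_X$. For item (1), when $K_T=\spt T$ is compact, the identity $\langle i\de\debar T,u\rangle=\langle T,i\de\debar u\rangle$ together with $i\de\debar T=0$ gives $\langle T,i\de\debar u\rangle=0$ for $u$ psh near $K_T$, so $i\de\debar u\wedge T=0$ there as well. To see that $K_T$ is a local maximum set at $p\in K_T$, given a small neighbourhood $U$ and $u$ psh on $\overline U$ I would compare $u$ with the constant $m:=\max_{K_T\cap\partial U}u$, using the estimate $i\de\debar(\chi\circ u)\wedge T\ge(\chi''\circ u)\,\de u\wedge\debar u\wedge T\ge0$ for convex increasing $\chi$ approximating $t\mapsto\max(t,m)$, integrating against a cut-off supported in $U$, and invoking the vanishing above; the one delicate point is that this vanishing is clean only on full neighbourhoods of $K_T$, so the localisation to $U$ has to be carried out as in the cut-off arguments of \cite{ST}.

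For item (2), which is a converse to (1), I would attach a non-zero Levi current to a local maximum set $K$ by exploiting the duality between positive $i\de\debar$-closed currents and Jensen measures. Fixing $p\in K$, the local maximum property provides a Jensen measure $\mu\ne\delta_p$ for $\Psh(X)$ with $\spt\mu\subseteq K$ and $u(p)\le\int u\,d\mu$ for all psh $u$. The functional $u\mapsto\int u\,d\mu-u(p)$ is $\ge0$ on psh functions and vanishes on pluriharmonic ones, and a Hahn--Banach/Choquet argument then produces a positive current $S$ of bidimension $(1,1)$ with $i\de\debar S=\mu-\delta_p$ and support controlled by $K$; trimming $S$ to $K$ and averaging over the available $p$ and $\mu$ would yield a non-zero positive $i\de\debar$-closed current $T$ with $\spt T\subseteq K$ and $i\de\debar u\wedge T=0$ for all continuous psh $u$, i.e.\ a Levi current supported on $K$. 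Then $\emptyset\ne\spt T\subseteq K\cap F$.

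For items (3) and (4) I would work on top of the structure theory of \cites{mst,mst2} for weakly complete surfaces, using that a $\Ci^k$ psh exhaustion restricts regularly to each regular level component $Y$, a compact real hypersurface. In (3), the hypothesis $\dim\mathcal{E}^k_x=2$ at some $x\in U\subseteq Y\cap\Sigma^k_X$ pins down the degeneracy of all global $\Ci^k$ psh functions at $x$ to a common $2$-plane inside $T_xY$; combining this with psh-ness and the regularity $k\ge4$ — the latter needed to legitimise a Frobenius integration and the ensuing flow/ODE estimates — I would produce near $x$ a holomorphic foliation by complex curves contained in $\Sigma^k_X$, and then invoke the classification of the level $\{\phi=c\}$ from \cite{mst2} to force these curves to be compact and to sweep out all of $X$. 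Hence every point of $X$ lies on a compact complex curve, so no psh function is strictly psh anywhere and $\Sigma^k_X=X$; and for each such compact curve $C$ the current $[C]$ is a Levi current ($[C]$ is closed and positive, and $i\de\debar u\wedge[C]=i\de\debar(u|_C)=0$ since $u|_C$ is constant on components), so $F=X$ as well. In (4), if the \emph{whole} regular level component $Y$ lies in $\Sigma^k_X$, the structure theory of \cites{mst,mst2} forces $Y$ to be a compact Levi-flat hypersurface contained in a compact connected piece $W$ of some $\phi^{-1}([c',c])$, $c'<c$, that is itself contained in $\Sigma^k_X$ and laminated; on such a laminated slab one builds, leaf by leaf, non-zero positive $i\de\debar$-closed currents directed by the lamination (harmonic currents), which one checks are Levi currents and whose supports cover $W$, giving $Y\subseteq W\subseteq F$.

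I expect the main obstacles to lie in (2) and (4). In (2), the difficulty is carrying out the Jensen-measure-to-current passage so that the resulting current is \emph{exactly} $i\de\debar$-closed, satisfies the Levi condition, and does not leak support outside $K$ — this is where the local maximum property of $K$, rather than mere pluripolarity, is essential. In (4), the difficulty is producing the slab $W\subseteq\Sigma^k_X$ below $Y$ with its laminated structure and checking that the associated harmonic currents are Levi currents covering all of $W$ — precisely the step where two-dimensionality and the cited structure theorems cannot be dispensed with. By comparison, the localisation step in (1) and the Frobenius integration in (3) are technical but follow established patterns.
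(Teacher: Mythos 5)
Your treatment of the inclusion $F\subseteq\Sigma_X^k$ and of Item (1) is essentially sound: the first is the same degeneracy-at-support-points argument the paper runs (via Lemma \ref{lmm_spsh} / Corollary \ref{corol_spsh}), and for (1) you follow the integration-by-parts route of Sibony rather than the paper's argument via Slodkowski's characterization of non-local-maximum sets (\cite{Sl}, Proposition 2.3) combined with a max-gluing to produce a function psh near $K_T$ and strictly psh at the offending point; both routes work, the paper's choice avoiding the delicate localisation you flag. The serious problems are in (2), (3) and (4). In (2), the passage you propose from a Jensen measure $\mu$ at $p$ to a Levi current supported on $K$ does not go through as described: the Duval--Sibony-type construction yields a positive current $S$ with $i\de\debar S=\mu-\delta_p$, which is neither $\de\debar$-closed nor supported in $K$, and ``trimming to $K$'' destroys the equation while ``averaging over $p$ and $\mu$'' has no reason to kill $i\de\debar$. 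The paper instead argues by contradiction: by Sibony's duality theorem (\cite{Sib_pseudo}, Theorem 3.1) the absence of such a current forces a smooth strictly psh function $u$ on a neighbourhood of $K$; perturbing $u$ gives $2n$ strictly psh functions with independent differentials at a point, and Slodkowski's results produce a local maximum subset $K'\subseteq K$ on which all of them are constant, hence an isolated point of $K'$ --- impossible for a local maximum set. That second half (deriving the contradiction from the mere existence of a strictly psh function near $K$) is absent from your sketch and is not a formality.

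For (3) and (4) you invoke the structure theory of \cite{mst} and \cite{mst2}, but those classification theorems require a \emph{real-analytic} exhaustion, whereas here $\phi$ is only $\Ci^k$; the paper confines that machinery to Section \ref{s:analytic} precisely for this reason. The actual proof of (3) is more elementary: a second function $\psi$ with $d\phi,d\psi$ independent at $x$ gives, via Sard (this is where $k\geq 4$ enters, not Frobenius/ODE regularity), regular level curves $C_b=\{\psi=b\}\cap Y$ which are real $2$-manifolds with complex tangent spaces wherever they meet $\Sigma_X^k$ (since $\mathcal E^k=\mathcal S^k$ there, by Proposition \ref{prop_es}); a boundary analysis using the holomorphic discs of Lemma \ref{l:intersection} shows $C_b\subseteq\Sigma_X^k$, so one obtains uncountably many disjoint compact curves and concludes by Nishino's theorem, with no appeal to the analytic classification. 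Likewise in (4) the laminated slab and leafwise harmonic currents you describe are not available in the $\Ci^k$ category and would anyway leave open why their supports fill $W$; the paper instead uses \cite{ST}*{Theorem 3.9} to get $(dd^c\phi)^2=0$ on a collar $\phi^{-1}([c',c])$ and exhibits the explicit Levi current $T=i\de\phi\wedge\debar\phi$ there, whose disintegration along the levels of $\phi$ produces Levi currents supported on entire level sets. You should either supply proofs of the structure statements you rely on under the $\Ci^k$ hypothesis (which is essentially reproving the paper's arguments) or switch to these direct constructions.
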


Moreover, given \emph{any}
psh function $u\in \Psh^0 (X)$,
any Levi current $T$ can be naturally disintegrated as $T= \int T_c \, d\mu$, where
$\mu$ is a positive measure on $\R$ and $T_c$ is a Levi current supported on $\{u=c\}$, see Corollary
\ref{cor_disint}.
This in particular gives examples of Levi currents for which Item \eqref{item_t_locmax} applies.
Notice that, whenever two level sets of $u,v\in \Psh^0(X)$
do not coincide, this allows to further refine the description of the extremal Levi current. This
motivates the definition \eqref{eq:e} of the
 distributions $\mathcal E^k$.

It follows from \cite{mst,ST} that, when $k\geq 2$,
for any level set $Y$ of an exhaustion function $\phi\in \Psh_e^0$,
 the set
$\Sigma_X^k\cap Y$ is a local maximum set (or empty), 
see Lemma \ref{l:intersection}. Hence Item \eqref{item_t_KY} applies for instance to such sets.
Finally, the manifold $X= \C \times \P^1$ provides an example where the Items \eqref{item_t_curves} and \eqref{item_t_Y} apply.

 \begin{rem}
 It would
be interesting to know if the equality holds in Item \eqref{item_t_KY} (and in particular for the
intersections between levels sets of a psh exhaustion function and the minimal kernel). Namely if, for \emph{any}
point in a local maximum set $K$, there exists a Levi current $T$ such that $x \in \spt T$.
\end{rem}

\medskip

The paper is organized as follows.
In Section \ref{s:levi} we recall the definition of Levi currents and 
the properties that we will need in the sequel. 
In Section \ref{section_locmax}
we prove
Items \eqref{item_t_locmax} 
 and \eqref{item_t_KY} 
of Theorem \ref{t:main}. The first item
is established for
$\Sigma_X^k \cap \{\phi_0=c_0\}$
(where $c_0$ is an attained value for a 
continuous
minimal function $\phi_0$ and $k\geq 2$) 
 in \cite{ST}*{Theorem 3.6},
and
 is actually a consequence of 
\cite{Sib_pf}*{Theorem 3.1},
 where it is proved through an integration by parts, see also
 \cite{Sib}*{Proposition 4.2}
 for an analogous statement 
  for $F$.
We give here a different proof by means of
a characterization of the
local maximum property
due to Slodkowski
\cite{Sl}
which allows to 
bypass the use of Brebermann functions and 
Jensen measures
as in \cite{ST}.
 In Section \ref{s:kernels} we study the relation between
the minimal kernels $\Sigma_X^k$ and
distributions in 
 the tangent bundle $TX$ given by 
directions satisfying some degeneracy 
condition. This leads to the proof
of Item \eqref{item_t_curves}.
The proof
of Theorem \ref{t:main} is completed in Section \ref{s:proof_main}, where
we establish Item \eqref{item_t_Y}. 
In Section \ref{s:analytic} we
consider
 the case where
 $X$ is a surface and the exhaustion
function in Theorem \ref{t:main} can be chosen analytic. By
exploting the main result in \cite{mst},
we deduce 
a classification of Levi currents in this case.

\subsection*{Acknowledgements}
The authors would like to thank Nessim Sibony
for the references
and for  very useful
comments that also helped to improve Item \eqref{item_t_KY} in the main theorem,
and Zbigniew Slodkowski
for
very
helpful remarks on a preliminary version of this paper.
They also would like to thank Vi\^{e}t-Anh Nguy\^{e}n and Giuseppe
Tomassini
for useful observations and discussions.

This work was supported by the 
Research in Pairs 2019 program of the CIRM (Centro Internazionale di Ricerca Matematica), Trento
and the FBK (Fondazione Bruno Kessler).
The authors would like to warmly thank CIRM-FBK for their support,
 hospitality, and
  excellent work conditions.
This project also 
received funding from the
I-SITE ULNE (ANR-16-IDEX-0004 ULNE),
the
LabEx CEMPI (ANR-11-LABX-0007-01)
 and from the CNRS 
 program PEPS JCJC 2019.

\section{Levi currents on complex manifolds}\label{s:levi}
In this section 
we recall the definition of Levi current and give the properties that we need in the sequel.
These results are essentially contained in \cite[Section 4]{Sib} and \cite[Section 3]{Sib_pseudo},
we sketch here the proofs for completeness.
We let
$X$ be any complex manifold and we denote by $\Psh^0(X)$ the space
of continuous plurisubharmonic functions on $X$.

\begin{defin}[Sibony \cite{Sib}]
 \label{def_Levic}A current $T$ on $X$ is called \emph{Levi current} if
\begin{enumerate}
\item $T$ is non zero;
\item $T$ is of bidimension $(1,1)$;
\item $T$ is positive;
\item $i\de\debar T=0$;
\item $T\wedge i\de\debar u=0$ for all $u\in \Psh^0(X)$.
\end{enumerate}
\end{defin}

A Levi current $T$ is \emph{extremal}
if $T=T_1 = T_2$ whenever $T=(T_1+T_2)/2$ for $T_1,T_2$ Levi currents.

\begin{lemma}\label{lemma_vanish} Take $u\in\Psh^0 (X)$ and
let
$T$
be
 a Levi current. The currents
$$T\wedge \de u\;,\quad \quad T\wedge\debar u\;,\quad  \mbox{and} \quad \quad T\wedge \de u\wedge \debar u$$
are all well defined and 
vanish identically on $X$.\end{lemma}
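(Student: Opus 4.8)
The plan is to reduce the whole statement to the vanishing of the single positive measure $T\wedge i\de u\wedge\debar u$, and then to deduce this last fact directly from condition~(5) of Definition~\ref{def_Levic}, applied to a suitable plurisubharmonic function manufactured from $u$.

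As for the \emph{well-definedness}: since $u$ is continuous and $T$, being positive, has measure coefficients, the currents $uT$ and $u^2T$ are defined. Following the Bedford--Taylor type calculus adapted to positive $i\de\debar$-closed (rather than closed) currents, as developed by Sibony in \cite{Sib} and \cite{Sib_pseudo}, the products $T\wedge i\de\debar u$ and $T\wedge i\de u\wedge\debar u$ are well-defined positive measures obeying the usual product rules; in particular condition~(5) itself makes sense, and $T\wedge i\de u\wedge\debar u$ equals the weak limit of $T\wedge i\de u_j\wedge\debar u_j$ along any sequence of smooth psh functions $u_j\downarrow u$. The currents $T\wedge\de u$ and $T\wedge\debar u$, of respective bidimension $(0,1)$ and $(1,0)$, are then defined through the Cauchy--Schwarz inequality for the positive current $T$, and $T\wedge\de u\wedge\debar u$ is merely a constant multiple of the measure $T\wedge i\de u\wedge\debar u$.

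For the \emph{vanishing} I would reason as follows. The function $e^u$ is continuous and plurisubharmonic, hence lies in $\Psh^0(X)$, so condition~(5) gives $T\wedge i\de\debar(e^u)=0$. On the other hand
\[
i\de\debar(e^u)=e^u\,\big(i\de\debar u+i\de u\wedge\debar u\big),
\]
and $T\wedge i\de\debar u=0$ by condition~(5) again, so that $e^u\cdot\big(T\wedge i\de u\wedge\debar u\big)=0$, whence $T\wedge i\de u\wedge\debar u=0$ since $e^u>0$. (Alternatively one may localize and use $(u+c)^2$ in place of $e^u$, which is psh for $c$ large enough, noting that adding a constant to $u$ leaves $i\de u\wedge\debar u$ unchanged.) This already gives $T\wedge\de u\wedge\debar u=0$. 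Finally, the Cauchy--Schwarz inequality for $T$ provides, for every smooth compactly supported $(1,0)$-form $\alpha$ and every Borel set $E$,
\[
\big|T\wedge i\de u\wedge\bar\alpha\big|(E)^2\;\le\;\big(T\wedge i\de u\wedge\debar u\big)(E)\cdot\big(T\wedge i\alpha\wedge\bar\alpha\big)(E)=0,
\]
so that $T\wedge\de u=0$; taking complex conjugates and using that $T$ is real gives $T\wedge\debar u=0$ as well.

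The only genuinely delicate step is the first one: establishing these wedge products, the product rules and the Cauchy--Schwarz inequality for a merely continuous $u$ and a current $T$ that is only $i\de\debar$-closed. Closedness of $T$ is what the classical Bedford--Taylor theory relies on, so one has to check that $i\de\debar$-closedness is enough, both for the convergence $T\wedge i\de u_j\wedge\debar u_j\to T\wedge i\de u\wedge\debar u$ (weakly, along $u_j\downarrow u$) and for the identity $i\de\debar(e^u)\wedge T=e^u(i\de\debar u+i\de u\wedge\debar u)\wedge T$ to survive this limit. This is precisely what we import from \cite{Sib,Sib_pseudo}; once it is granted, the vanishing is the short computation above.
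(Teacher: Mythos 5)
Your proof is correct and takes essentially the same route as the paper's: well-definedness is delegated to the references, the vanishing of $T\wedge \de u\wedge\debar u$ is obtained by applying condition (5) to $e^u$ and expanding $i\de\debar(e^u)=e^u(i\de\debar u+i\de u\wedge\debar u)$, and the remaining two currents are handled by the Cauchy--Schwarz inequality for the positive current $T$. No substantive differences to report.
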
 
\begin{proof}
The currents in the statement are well defined when $u$ is smooth, and the arguments from 
\cite{DS}*{Section 2} and
\cite{Sib}*{Section 4} prove the good definition for $u \in \Psh^0(X)$.

If $u\in \Psh^0(X)$, then also $\exp(u)\in\Psh^0(X)$. 
So, by Definition
\ref{def_Levic} 
of Levi current, 
$T\wedge i\de\debar \exp(u)$ vanishes identically. Hence, we have
$$0=\exp(u)T\wedge i\de\debar u
+i\exp(u)T\wedge\de u\wedge\debar u\;.$$
Given that $T\wedge i \de\debar u=0$, we conclude that 
$T\wedge \de u \wedge\debar u=0$.
This gives the last identity. 
We prove now the first one, the proof for the second one is similar.
Since $T$ is positive, 
for any $(0,1)$-form $\alpha$
by Cauchy-Schwarz's
inequality we get
\[
|\langle T, \partial u \wedge \alpha\rangle|^2 \leq 
c
\langle T, \partial u \wedge \bar \partial u\rangle
\cdot
\langle T, \alpha \wedge \bar \alpha\rangle 
\]
where $c$ is a constant independent of $T,u$, and $\alpha$. Since the 
first factor in the RHS is zero by the first part of the proof, the assertion follows.
\end{proof}

By a standard disintegration procedure, we obtain the following consequence.

\begin{corol}\label{cor_disint}Suppose $T$ is a Levi current and
 $u\in \Psh^0(X)$; then there exists a measure $\mu$ on $\R$ and a collection of currents $T_c$, $c\in\R$ such that
\begin{itemize}
\item $T_c$ is supported on $Y_c=\{x\in X\ :\ u(x)=c\}$ for all $c\in\R$;
\item $T_c$ is non zero for $\mu$-almost every $c\in\R$;
\item whenever $T_c\neq 0$, $T_c$ is a Levi current;
\item for every $2$-form $\alpha$ on $X$ we have
$$\langle T,\alpha\rangle=\int_{\R}\langle T_c,\alpha\rangle d\mu(c)\;.$$
\end{itemize}
Moreover, if $u\in\Psh^1(X)$ and $c$ is a regular value for $u$,
then $T_c=j_*S_c$, where
$j$ is the inclusion of $Y_c$ in $X$ and $S_c$ a current on the real manifold $Y_c$.
\end{corol}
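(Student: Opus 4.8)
The plan is to disintegrate the trace measure of $T$ along $u$ and then check that the resulting pieces inherit all the defining properties of a Levi current.

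Fix a Hermitian metric $\omega$ on $X$; replacing it by a conformal multiple we may assume that the trace measure $\sigma_T:=T\wedge\omega$ is a \emph{finite} positive measure, and we write $T=\vec T\,\sigma_T$ for the polar decomposition, so that $\vec T$ is a $\sigma_T$-measurable field of $(n-1,n-1)$-vectors with $\langle\vec T,\omega\rangle=1$ and $\vec T$ positive, $\sigma_T$-a.e.\ (the latter because $T\geq0$). Set $\mu:=u_*\sigma_T$ on $\R$ and apply the disintegration theorem: there is a $\mu$-measurable family of probability measures $(\sigma_c)_{c\in\R}$ with $\spt\sigma_c\subseteq Y_c$ for $\mu$-a.e.\ $c$ (put $\sigma_c:=0$ on the exceptional null set) and $\sigma_T=\int_\R\sigma_c\,d\mu(c)$. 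Define
\[
T_c:=\vec T\,\sigma_c .
\]
Then $T_c$ is a positive current of bidimension $(1,1)$ and order $0$ supported on $Y_c$ (note $\sigma_c\ll\sigma_T$ for $\mu$-a.e.\ $c$, so $\vec T$ remains positive $\sigma_c$-a.e.); its trace measure is $\langle\vec T,\omega\rangle\,\sigma_c=\sigma_c$, so $T_c\neq0$ for $\mu$-a.e.\ $c$; and for every test $2$-form $\alpha$ one has, by disintegration, $\langle T,\alpha\rangle=\int_X\langle\vec T,\alpha\rangle\,d\sigma_T=\int_\R\langle T_c,\alpha\rangle\,d\mu(c)$, which is the fourth bullet. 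Conditions (1)--(3) of Definition~\ref{def_Levic} thus hold for $T_c$, for $\mu$-a.e.\ $c$.

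To get conditions (4) and (5), take smooth compactly supported $\chi\colon\R\to\R$ and $g\in\Ci^\infty_c(X)$ and expand
\[
i\de\debar\bigl(\chi(u)g\bigr)=\chi(u)\,i\de\debar g+\chi'(u)\bigl(g\,i\de\debar u+i\,\de u\wedge\debar g+i\,\de g\wedge\debar u\bigr)+\chi''(u)\,g\,i\,\de u\wedge\debar u .
\]
Pairing with $T$: the left-hand side vanishes because $i\de\debar T=0$, while Lemma~\ref{lemma_vanish} and condition (5) of Definition~\ref{def_Levic} applied to $u$ itself force all the terms on the right except $\langle T,\chi(u)\,i\de\debar g\rangle$ to vanish, whence $\langle T,\chi(u)\,i\de\debar g\rangle=0$. (When $u$ is only continuous one first replaces $u$ by a local smooth psh approximation $u_\varepsilon\searrow u$ on a neighbourhood of $\spt g$ and lets $\varepsilon\to0$, exactly as in the proof of Lemma~\ref{lemma_vanish}.) Likewise, as $T\wedge i\de\debar v=0$ is a vanishing measure, $\langle T,\chi(u)\,g\,i\de\debar v\rangle=\langle T\wedge i\de\debar v,\chi(u)g\rangle=0$ for all $v\in\Psh^0(X)$. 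Rewriting the left-hand sides through the disintegration as $\int_\R\chi(c)\langle T_c,i\de\debar g\rangle\,d\mu(c)$ and $\int_\R\chi(c)\langle T_c,g\,i\de\debar v\rangle\,d\mu(c)$, respectively, and letting $\chi$ vary in a sequence dense in $\Ci^0_c(\R)$ forces these integrands to be zero for $\mu$-a.e.\ $c$; then letting $g$ range over a countable dense family of test functions and $v$ over a countable family of (local, smooth) psh functions determining condition (5) of Definition~\ref{def_Levic}, and using continuity, upgrades this to $i\de\debar T_c=0$ and $T_c\wedge i\de\debar v=0$ for $\mu$-a.e.\ $c$, so that $T_c$ is a Levi current for $\mu$-a.e.\ $c$. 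The two points I expect to require genuine care are exactly this uniformization over countably many test objects and the limiting argument when $u$ is not smooth; the rest is bookkeeping.

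For the last assertion, let $u\in\Psh^1(X)$ and let $c$ be a regular value, so $Y_c$ is a $\Ci^1$ real hypersurface. Disintegrating the identities $T\wedge\de u=0$ and $T\wedge\debar u=0$ of Lemma~\ref{lemma_vanish} in the same manner (from $\langle T,\chi(u)\,\de u\wedge\beta\rangle=\langle T\wedge\de u,\chi(u)\beta\rangle=0$ for test $(0,1)$-forms $\beta$) yields $T_c\wedge\de u=T_c\wedge\debar u=0$, hence $T_c\wedge du=0$, for $\mu$-a.e.\ $c$. Since $T_c$ is an order-$0$ current supported on the $\Ci^1$ hypersurface $Y_c$, in local coordinates adapted to $Y_c=\{u-c=0\}$ its coefficients are measures carried by $\{u-c=0\}$, and $T_c\wedge du=0$ says precisely that the components involving $d(u-c)$ are zero; this is the elementary structure statement yielding $T_c=j_*S_c$, with $j\colon Y_c\hookrightarrow X$ the inclusion and $S_c$ an order-$0$ current on $Y_c$. (If $\cdim X=1$ the hypersurface $Y_c$ is a real curve, $T_c=0$, and the claim is vacuous.)
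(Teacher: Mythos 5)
Your proof is correct and is essentially the argument the paper has in mind: the paper gives no details here, stating only that the corollary follows ``by a standard disintegration procedure'' from Lemma \ref{lemma_vanish}, and your write-up is exactly that procedure (disintegration of the trace measure along $u$, with Lemma \ref{lemma_vanish} and condition (5) of Definition \ref{def_Levic} killing every term of $i\de\debar(\chi(u)g)$ except $\chi(u)\,i\de\debar g$, plus the tangency argument $T_c\wedge du=0$ for the final assertion). The two points you flag as delicate --- uniformization over countable dense families of test objects and the regularization when $u$ is only continuous --- are indeed the only nontrivial steps, and both are handled by the separability of $\Ci^0$ together with the continuity properties of $v\mapsto T\wedge i\de\debar v$ from the Dinh--Sibony framework that the paper already invokes to define these products for $v\in\Psh^0(X)$.
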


Notice that $T_c$ needs not be extremal,
as it is easily seen considering $X=\C \times \P^1$.

\begin{rem}
Suppose now that $X$ is weakly complete  and let
$\phi$ be a psh exhaustion function.
By Corollary \ref{cor_disint}, every Levi current is
obtained by averaging Levi currents which are supported on the level sets
of $\phi$; as the latter is an exhaustion of $X$, its level sets are compact, so
every Levi current on $X$ is an integral average of compactly supported Levi
currents, i.e., positive currents of bidimension $(1,1)$ which
are $\de\debar$-closed and compactly supported.\end{rem}

\begin{corol}\label{corol_spsh}
If $T$ is a Levi current
and $u\in \Psh^1(X)$, then the vector field associated
to $T$ is tangent to the kernel of $\de u\wedge\debar u$,
whenever the latter is non-zero
(and the former is defined). 
Moreover, if there exists $v\in\Psh^0 (X)$
which is strictly plurisubharmonic at a point $x\in X$, then $x\not\in\spt T$
for any Levi current $T$.\end{corol}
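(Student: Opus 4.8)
The plan is to reduce both assertions to pointwise statements holding $\|T\|$-almost everywhere, using Lemma~\ref{lemma_vanish}, item~(5) of Definition~\ref{def_Levic}, and the polar decomposition $T=\vec T\,\|T\|$ of a positive current of bidimension $(1,1)$, in which $\|T\|$ is the trace measure and $\vec T$ a $\|T\|$-measurable field of positive $(1,1)$-vectors of unit mass. Here ``the vector field associated to $T$'' means the field $\xi$ of $(1,0)$-vectors with $\vec T_x=i\,\xi_x\wedge\overline{\xi_x}$ for $\|T\|$-a.e.\ $x$, which is defined precisely when $\vec T_x$ is a simple vector a.e.\ (for instance when $T$ is extremal); in general $\vec T_x=i\sum_k\xi_{k,x}\wedge\overline{\xi_{k,x}}$ and the computation below applies to each $\xi_{k,x}$.

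For the first statement I would argue as follows. Since $u\in\Psh^1(X)\subseteq\Psh^0(X)$, Lemma~\ref{lemma_vanish} gives $T\wedge\de u\wedge\debar u=0$, hence also $T\wedge\bigl(i\,\de u\wedge\debar u\bigr)=0$. As $i\,\de u\wedge\debar u$ is a continuous positive $(1,1)$-form and $T$ is positive, the wedge product on the left is the nonnegative measure whose density against $\|T\|$ is $\langle\vec T_x,\,i\,\de u\wedge\debar u\rangle$; its vanishing forces this density to be zero for $\|T\|$-a.e.\ $x$. Now $\langle i\,\xi_x\wedge\overline{\xi_x},\,i\,\de u\wedge\debar u\rangle$ equals a positive multiple of $|(\de u)_x(\xi_x)|^2$, so $(\de u)_x(\xi_x)=0$ for $\|T\|$-a.e.\ $x$. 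At a point where $(\de u)_x\neq0$ the rank-one form $\de u\wedge\debar u$ has kernel exactly $\{v:(\de u)_x(v)=0\}$, and we have just shown $\xi_x$ lies in it; this is the claim (and, when $\vec T_x$ is not simple, the same computation shows $(\de u)_x$ annihilates the whole plane $\Span\{\xi_{k,x}\}$, i.e.\ this plane is contained in the kernel of $\de u\wedge\debar u$).

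For the second statement, suppose $v\in\Psh^0(X)$ is strictly psh at $x$, so that in local holomorphic coordinates $z$ on a neighborhood $U$ of $x$ one has $v-c|z|^2$ psh on $U$ for some $c>0$. Write $\omega_0:=i\,\de\debar|z|^2$, a smooth strictly positive $(1,1)$-form on $U$. By item~(5) of Definition~\ref{def_Levic}, $T\wedge i\de\debar v=0$ on $X$, in particular on $U$. Using the positivity of $T\wedge i\de\debar g$ for $g$ psh, and its additivity in $g$ (part of the good definition of the product, as in \cite{DS}*{Section~2} and \cite{Sib}*{Section~4}), I would write on $U$
\[
0=T\wedge i\de\debar v=T\wedge i\de\debar\bigl(v-c|z|^2\bigr)+c\,T\wedge\omega_0\geq c\,T\wedge\omega_0\geq0
\]
as measures, the first inequality because $v-c|z|^2$ is psh. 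Hence $T\wedge\omega_0=0$ on $U$; since $\omega_0$ is a smooth strictly positive form, $T\wedge\omega_0$ is comparable to the restriction of $\|T\|$ to relatively compact subsets of $U$, so $\|T\|(U)=0$ and $x\notin\spt T$. As $T$ was an arbitrary Levi current, $x\notin\spt T$ for all of them.

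I expect the main obstacle to be not the (elementary) linear algebra but making sure that the products $T\wedge\de u\wedge\debar u$ and $T\wedge i\de\debar v$ are well defined for merely $\Ci^1$, resp.\ continuous, $u$ and $v$, and that the positivity and continuity properties used to pass to $\|T\|$-a.e.\ pointwise identities and to take the limits above actually hold; these, however, are exactly what Lemma~\ref{lemma_vanish} and the constructions recalled from \cite{DS,Sib} supply, so the argument stays at the level of a routine verification once those tools are in place. A minor point is fixing the convention for ``the vector field associated to $T$'' when $\vec T$ fails to be simple, which is covered by the parenthetical remark in the second paragraph.
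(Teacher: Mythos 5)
Your argument is correct. For the first assertion it coincides with the paper's: both reduce the claim to the identity $T\wedge\de u\wedge\debar u=0$ supplied by Lemma \ref{lemma_vanish} and read it off pointwise through the polar decomposition; your version merely makes explicit the measure-theoretic details the paper leaves implicit. For the second assertion, however, you take a genuinely different route. The paper first invokes Richberg to replace $v$ by a function that is smooth and strictly psh near $x$, then perturbs it by $2n$ small bumps $\rho_i$ so that the subspaces $\ker(\de(v+\rho_i)\wedge\debar(v+\rho_i))_x$ intersect trivially, and concludes from the first assertion that the direction of $T$ has nowhere to live near $x$. You instead use item (5) of Definition \ref{def_Levic} directly: writing $v=(v-c|z|^2)+c|z|^2$ near $x$ and exploiting positivity and additivity of $g\mapsto T\wedge i\de\debar g$, you force $T\wedge i\de\debar|z|^2=0$ near $x$, i.e.\ the vanishing of the trace measure of $T$ there. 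This is in fact the same mechanism as the final step of the paper's Lemma \ref{lmm_spsh} (where $T\wedge i\de\debar u$ being strictly positive near $x$ forces $T=0$ there), transplanted to the setting of a globally defined $v$, where item (5) applies with no compact-support hypothesis. Your route is shorter and avoids both Richberg's approximation and the perturbation family $\{\rho_i\}$, at the cost of leaning on the additivity and positivity of the product $T\wedge i\de\debar g$ for merely continuous psh $g$ — which is indeed covered by the constructions from \cite{DS} and \cite{Sib} already invoked in Lemma \ref{lemma_vanish}; the paper's route only ever wedges $T$ with first-order data of psh functions and so stays closer to the directional picture developed in Section \ref{s:kernels}.
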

\begin{proof}
The first statement is equivalent to $T\wedge\de u\wedge\debar u=0$, hence follows
from Lemma \ref{lemma_vanish}. 
Suppose now that
we have $v\in\Psh^0 (X)$ which is strictly psh at $x$
and a Levi current $T$.
First, by Richberg \cite[Satz 4.3]{Ri}
we can assume that $v$ is $\Ci^\infty$ and strictly psh near $x$.
Then, if $\rho\in\Ci^\infty_0(X)$ is supported in a neighbourhood
of $x$ where $v$ is strictly psh and $\|\rho\|_{\Ci^2}$ is small enough,
then also $v+\rho$ is
 psh.
 In particular, we can choose $\rho_i, 1\leq i \leq 2n$ such
that the
 $\ker(\de(v+\rho_i)\wedge\debar(v+\rho_i))_x$
 are independent (over $\mathbb R$).
This property holds true in a neighbourhood of $x$.
 Hence, as the vector field associated to $T$
(on a full measure subset of the support $T$ for the mass measure)
should belong to all
these subspaces, the only possibility
 is that $x\not\in\spt T$. This concludes the proof.\end{proof}

\begin{lemma}\label{lmm_spsh}
Let $T$ be a Levi current such that $K=\spt T$ is compact. If $u$
is defined and plurisubharmonic in an open neighbourhood  $V$ of $K$
 and strictly plurisubharmonic at  $x\in V$, 
 then $x\not\in\spt T$.\end{lemma}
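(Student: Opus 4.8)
The plan is to reduce the statement to the argument already used in the proof of Corollary \ref{corol_spsh}; the only new ingredient needed is a \emph{local} version of Lemma \ref{lemma_vanish}, valid for a function which is plurisubharmonic merely on a neighbourhood of $\spt T$ rather than on all of $X$. Concretely, the key point is that $T\wedge i\de\debar w=0$ whenever $w$ is continuous plurisubharmonic on an open set $V\supseteq\spt T$. Granting this local vanishing and applying it both to $w=u$ and to $w=\exp(u)$ (both plurisubharmonic on $V$), the exponential identity in the proof of Lemma \ref{lemma_vanish} yields $T\wedge\de u\wedge\debar u=0$, and then the Cauchy--Schwarz estimate there yields $T\wedge\de u=T\wedge\debar u=0$. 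All of these wedge products are well defined since $u$ is continuous plurisubharmonic near $\spt T$, by the local constructions of \cite{DS}*{Section 2} and \cite{Sib}*{Section 4}.

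To prove the local vanishing, observe that $\nu:=T\wedge i\de\debar w$ is a positive measure supported in $\spt T\subseteq V$. Fix $\rho\in\Ci^\infty_0(V)$ with $0\le\rho\le 1$ and $\rho\equiv 1$ on a neighbourhood of $\spt T$; then $\nu(X)=\langle\nu,\rho\rangle$, so it suffices to show that this pairing vanishes. By \cite{DS}*{Section 2} and \cite{Sib}*{Section 4}, $T\wedge i\de\debar w$ is the weak limit of $T\wedge i\de\debar w_j$, where $w_j$ is a decreasing sequence of smooth plurisubharmonic functions converging to $w$ on a fixed neighbourhood of $\spt T$. For each $j$, integrating by parts twice — there are no boundary terms, because $\spt T$ is compact and $w_j$ is smooth near it — and using that $i\de\debar T=0$ by Definition \ref{def_Levic}, we get
\[
\langle T,\rho\, i\de\debar w_j\rangle=\langle T,i\de\debar w_j\rangle=\langle i\de\debar T,\, w_j\rangle=0 .
\]
Letting $j\to\infty$ with $\rho$ fixed gives $\langle\nu,\rho\rangle=0$, hence $\nu=0$.

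With the local Lemma in hand we argue exactly as in the proof of Corollary \ref{corol_spsh}. By Richberg \cite[Satz 4.3]{Ri}, after shrinking $V$ around $x$ we may assume that $u$ is $\Ci^\infty$ and strictly plurisubharmonic on a coordinate ball $B$ with $x\in B$ and $\overline{B}\subseteq V$. Choose finitely many $\rho_i\in\Ci^\infty_0(B)$ with $\|\rho_i\|_{\Ci^2}$ small enough that each $u+\rho_i$ is plurisubharmonic on $V$ — this holds because $u+\rho_i=u$ outside $B$, while on $B$ the function $u$ is strictly plurisubharmonic — and such that the subspaces $\ker\big(\de(u+\rho_i)\wedge\debar(u+\rho_i)\big)_y$ have trivial intersection for all $y$ in a neighbourhood of $x$. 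Applying the local version of Lemma \ref{lemma_vanish} to each $u+\rho_i\in\Psh^0(V)$, the vector field associated to $T$ must lie in all these subspaces on a set of full $\|T\|$-measure; if $x\in\spt T$, this forces it to vanish on a set of full $\|T\|$-measure in a neighbourhood of $x$ in $\spt T$, which is impossible. Hence $x\notin\spt T$.

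The main obstacle is precisely the local vanishing $T\wedge i\de\debar w=0$ for $w$ plurisubharmonic only near $\spt T$: although $w$ need not be the restriction of a global plurisubharmonic function, the compactness of $\spt T$ makes the integration by parts free of boundary terms, which is exactly what allows us to invoke $i\de\debar T=0$. Once this is established, the remainder only transcribes the arguments of Lemma \ref{lemma_vanish} and Corollary \ref{corol_spsh}, together with the Bedford--Taylor-type continuity of the wedge products recalled in Section \ref{s:levi}.
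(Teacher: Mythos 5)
Your argument is correct, and its essential first half is the same as the paper's: both proofs hinge on the local vanishing $T\wedge i\de\debar u=0$, obtained from the compactness of $\spt T$ together with $i\de\debar T=0$ via an integration by parts with no boundary terms (the paper multiplies $u$ by a cutoff $\chi$ equal to $1$ near $K$ and pairs directly, $0=\langle i\de\debar T,u\rangle=\langle T\wedge i\de\debar u,1\rangle$; you cut off on the test-form side and pass through decreasing smooth approximants $w_j$ — the same computation). Where you genuinely diverge is the endgame. The paper stops as soon as $T\wedge i\de\debar u=0$ is known \emph{as a positive measure}: since $i\de\debar u$ dominates a positive Hermitian form near $x$, the measure $T\wedge i\de\debar u$ dominates the trace measure of $T$ there, so $T$ must vanish in a neighbourhood of $x$ — two lines, with no Richberg smoothing and no perturbations. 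You instead re-run the whole mechanism of Corollary \ref{corol_spsh} (Richberg, perturbations $u+\rho_i$, the exponential identity and Cauchy--Schwarz to get $T\wedge\de v\wedge\debar v=0$, and the kernel/vector-field argument); this is valid, and it buys you the ability to quote Corollary \ref{corol_spsh} verbatim once your local vanishing lemma is in place, but it is strictly more work than the direct conclusion. One minor caveat in your implementation: a globally decreasing sequence of smooth psh approximants on a neighbourhood of $K$ need not exist on a general complex manifold; since $K$ is compact this is repaired by localizing to finitely many charts, or avoided altogether by defining $\langle T\wedge i\de\debar w,\rho\rangle$ through $i\de\debar(wT)$ modulo exact terms as in the proof of Lemma \ref{lmm_cptspp} — which is effectively what the paper's cutoff formulation does.
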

\begin{proof}
Let
$V'\Subset V$ be an open neighbourhood of $K$ containing $x$.
Let  $\chi\in\Ci^\infty_0(V)$ be such that $\chi\vert_{V'}\equiv 1$, then $\chi u$ is
defined on $X$ and
psh on $V'$.
  As $\spt T\subseteq V'$,
also $\spt(T\wedge i\de\debar u)\subseteq V'$, so $T\wedge i\de\debar u$
 is positive; moreover, as $T$ is a Levi current,
we have $i \de\debar T=0$, hence
$$0=\langle i\de\debar T, u\rangle=\langle T, i\de\debar u\rangle=\langle T\wedge i \de\debar u, 1\rangle.$$
Therefore, $T\wedge i\de\debar u=0$ as a (positive) measure.

Since $(i\de\debar u)_x>0$,
this happens in a neighbourhood of $x$, so $T\wedge i\de\debar u$ is strictly
positive in a neighbourhood of $x$ unless $T$ is zero there. This gives
 $x\not\in\spt T$ and concludes the proof.
\end{proof}

\begin{lemma}\label{lmm_cptspp}
Suppose that a  current $T$ satisfies requests $1-4$ of Definition \ref{def_Levic}
and $T$ has compact support. 
Then $T$ is a Levi current.\end{lemma}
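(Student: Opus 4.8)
The plan is to show that a current $T$ satisfying conditions $(1)$--$(4)$ of Definition \ref{def_Levic} and having compact support automatically satisfies condition $(5)$, i.e.\ that $T\wedge i\de\debar u=0$ for every $u\in\Psh^0(X)$. So I would fix $u\in\Psh^0(X)$. Since $u$ is continuous and plurisubharmonic and $T$ is positive, the product $T\wedge i\de\debar u$ is a well-defined positive measure on $X$ (by the good-definition and approximation properties of the wedge product recalled at the beginning of this section; cf.\ \cite{DS}*{Section 2} and \cite{Sib}*{Section 4}), and its support is contained in the compact set $K=\spt T$. Hence it suffices to show that its total mass $\langle T\wedge i\de\debar u,1\rangle$ vanishes.

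For this I would argue exactly as in the proof of Lemma \ref{lmm_spsh}. Pick $\chi\in\Ci^\infty_0(X)$ equal to $1$ on a neighbourhood $W$ of $K$. Since $\chi u=u$ on $W$ and $\spt T\subseteq W$, the measures $T\wedge i\de\debar(\chi u)$ and $T\wedge i\de\debar u$ agree, and $\langle T\wedge i\de\debar u,1\rangle=\langle T\wedge i\de\debar u,\chi\rangle$. Approximating $u$ near $K$ by a decreasing sequence of smooth plurisubharmonic functions $u_j$, using the continuity of the wedge product under such limits, the fact that the terms produced by the derivatives of $\chi$ are supported away from $\spt T$ and hence pair to zero with $T$, and the hypothesis $i\de\debar T=0$, one gets
\[
\langle T\wedge i\de\debar u,1\rangle=\lim_j\langle T, i\de\debar(\chi u_j)\rangle=\lim_j\langle i\de\debar T,\chi u_j\rangle=0 .
\]
Therefore $T\wedge i\de\debar u=0$ as a positive measure, and since $u\in\Psh^0(X)$ was arbitrary, $T$ is a Levi current.

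The only step that is not purely formal here is the chain of equalities above, i.e.\ the ``integration by parts'' identity $\langle T\wedge i\de\debar v,1\rangle=\langle i\de\debar T,v\rangle$ for a compactly supported continuous function $v$ which is plurisubharmonic in a neighbourhood of $\spt T$. This is precisely the computation already carried out in the proof of Lemma \ref{lmm_spsh}, which uses only properties $(1)$--$(4)$ of $T$ together with the compactness of $\spt T$, and not the Levi condition $(5)$; so, alternatively, one may simply rerun that argument verbatim with $v=u$ to conclude.
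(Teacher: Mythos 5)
Your proposal is correct and follows essentially the same route as the paper: both reduce condition (5) to showing that the positive measure $T\wedge i\de\debar u$ has zero total mass, and both obtain this by an integration by parts that transfers $i\de\debar$ onto $T$ and invokes $i\de\debar T=0$ together with the compactness of $\spt T$. The only cosmetic difference is that you localize with a cutoff and regularize $u$, whereas the paper expands $i\de\debar(uT)$ by the Leibniz rule and applies Stokes' theorem to each exact term.
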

\begin{proof}
Given that $T$ is compactly supported, so are
$uT$, $T\wedge \de u$, $T\wedge \debar u$,
and $T\wedge i \de\debar u$
for all $u\in \Psh^0(X)$. Moreover, as $T$ is positive and $u$ is psh,
$T\wedge i \de\debar u$ is a positive measure on $X$; therefore, it is zero if and only if
$\langle T\wedge i \de\debar u, 1\rangle=0$.

Notice that, by Stokes' theorem, we have $\langle i \de\debar (uT), 1=0\rangle$, hence
$$0=-\langle i\de\debar u \wedge T, 1\rangle + \langle ui\de\debar T, 1\rangle +\langle i\de(\debar u\wedge  T), 1\rangle-\langle i\debar(\de u\wedge T), 1\rangle.$$
We have $i\de\debar T=0$ by hypothesis, while $\langle i\de(\debar u\wedge  T), 1\rangle$ and $\langle i\debar(\de u\wedge T), 1\rangle$ vanish by another application of Stokes' theorem. Therefore
$T\wedge i \de\debar u=0$,
that is, $T$ is a Levi current.
\end{proof}

\section{Local maximum sets}\label{section_locmax}

We establish here Items
\eqref{item_t_locmax}
 and \eqref{item_t_KY}
of Theorem \ref{t:main}.
We recall the following definition, see also \cite{ST}*{Section 2}
and \cite{R}.

\begin{defin}\label{defi_localmax}
Let $X$ be a complex manifold and
 $K \subset X$ be compact. We say that
\emph{$K$ is a local maximum set} if
 every $x\in K$ has a neighbourhood $U$
with the following property: for every compact set  $K' \subset U$ and every
function $\psi$ which is strictly psh in a neighbourhood of $K'$, we have
\[
\max_{K \cap K'}\psi = \max_{K \cap bK'} \psi.
\] 
\end{defin}

\begin{propos}\label{lemma:levi_local_max}
Suppose that $X$ is weakly complete. If
  $T$ is a Levi current with compact support, then $\spt T$ is a local maximum set.
 \end{propos}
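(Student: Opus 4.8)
The plan is to argue by contradiction and to reduce the problem, via Slodkowski's characterisation of the local maximum property from \cite{Sl}, to an assertion about plurisubharmonic functions defined near the \emph{whole} of $\spt T$, at which point the vanishing properties of Levi currents established above (Lemmas \ref{lemma_vanish} and \ref{lmm_spsh}) apply directly. This is the route that bypasses the Bremermann envelopes and Jensen measures used in \cite{ST}.

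Concretely, set $K:=\spt T$, which is compact by hypothesis, and recall that $i\de\debar T=0$ and that $T$ is positive of bidimension $(1,1)$, $T$ being a Levi current (Definition \ref{def_Levic}). Suppose, for contradiction, that $K$ is not a local maximum set. The condition in Definition \ref{defi_localmax} is phrased in terms of strictly psh test functions defined a priori only on neighbourhoods of arbitrarily small compact sets $K'\subseteq U$; the point of Slodkowski's characterisation \cite{Sl} is that the failure of this condition can equivalently be detected by a function $u$ that is plurisubharmonic on a full neighbourhood $W\supseteq K$ in $X$ and strictly plurisubharmonic near some point $q\in K$. The hypothesis that $X$ is weakly complete enters here: a continuous psh exhaustion $\phi$ of $X$ furnishes a fundamental system of relatively compact neighbourhoods of $K$ — namely the sublevel sets of $\phi$ — along which the characterisation may be run. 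Having produced such $u$ and $q$, we may assume after a Richberg smoothing \cite{Ri} that $u$ is $\Ci^\infty$ near $q$, as in the proof of Corollary \ref{corol_spsh}. Then Lemma \ref{lmm_spsh}, applied to $u$ and the point $q\in\spt T$, gives $q\notin\spt T=K$ --- a contradiction. (Equivalently, and by hand: $T\wedge i\de\debar u$ is a positive measure supported in $K\subseteq W$, and since $i\de\debar T=0$ we get $\langle T\wedge i\de\debar u,1\rangle=\langle i\de\debar T,u\rangle=0$, so $T\wedge i\de\debar u\equiv 0$; but near $q$ one has $i\de\debar u\geq c\,\omega>0$, so this measure would be strictly positive in a neighbourhood of $q$ unless $q\notin\spt T$.) Hence $K$ is a local maximum set.

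The crux of the argument, and the step I expect to be the main obstacle, is the reduction in the first half of the proof: the local maximum property of Definition \ref{defi_localmax} only controls (strictly) psh functions on neighbourhoods of small compact pieces $K'$, and a naive attempt to glue such a locally defined test function with a constant outside $K'$ fails, precisely because the function is uncontrolled away from $K$ itself. Upgrading this to a single plurisubharmonic function defined on a neighbourhood of \emph{all} of $K$ --- which is exactly what makes the identity $i\de\debar T=0$ bite --- is the content we import from Slodkowski \cite{Sl}. Once this is in place, the remaining points (that the Richberg smoothing near $q$ preserves plurisubharmonicity on a slightly smaller neighbourhood of $K$ and keeps $q$ in $K$, and the concluding use of $i\de\debar T=0$) are routine.
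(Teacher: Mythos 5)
Your overall skeleton coincides with the paper's: argue by contradiction, invoke Slodkowski's characterisation of the local maximum property from \cite{Sl}, produce a plurisubharmonic function defined near all of $\spt T$ that is strictly plurisubharmonic at some point of $\spt T$, and conclude with Lemma \ref{lmm_spsh}. However, there is a genuine gap at precisely the step you yourself identify as the crux. Slodkowski's result (\cite{Sl}*{Proposition 2.3}) does \emph{not} hand you a psh function on a full neighbourhood $W\supseteq K$ that is strictly psh near a point of $K$; it is a purely local statement. What it gives is a point $x\in K$, a small coordinate ball $B$ around $x$, and a strictly psh function $\psi$ defined only on $B$, with $\psi(x)=0$ and the quantitative bound $\psi(y)\le -\epsilon\|z(y)\|^2$ on $K\cap B$. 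Attributing the passage from this local datum to a globally defined (near $K$) psh test function to the citation is where your argument stops being a proof: that passage is exactly the content of the paper's construction, and it is not routine.

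Concretely, the paper uses weak completeness not to produce ``a fundamental system of sublevel neighbourhoods of $K$,'' but to produce a continuous psh function $u$ on all of $X$ which, after an affine rescaling, satisfies $u(x)=-\epsilon/4$ and $\sup_B|u|<\epsilon/2$. The negativity bound on $\psi$ along $K\cap B$ then guarantees that $\psi<u$ on the outer shell of $B$ near $K$, so that $v:=\chi\max\{u,\psi\}$ on $B$, extended by $\chi u$ off $B$ (with $\chi$ a cutoff equal to $1$ near $K$ and supported where the estimates hold), is well defined, psh in a neighbourhood of $K$, and equal to $\psi$ (hence strictly psh) near $x$. Only after this max-gluing does Lemma \ref{lmm_spsh} apply and yield the contradiction $x\notin\spt T$. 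Your proposal omits this construction entirely, so as written it does not establish the statement; to repair it you need to carry out (or correctly source) the gluing step, for which the quantitative estimate $\psi\le-\epsilon\|z\|^2$ on $K\cap B$ from \cite{Sl} is essential.
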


\begin{proof}

Suppose that $K:=\spt T$ is not a local maximum set. 
By \cite{Sl}*{Proposition 2.3}
 there exist
$x\in K$,
a neighbourhood $B$ of $x$, with local
coordinates $z$ with origin in $x$,
 such that $B\equiv\{z\in\C^n\ :\ \|z\|<1\}$, and  
 $\psi\colon B \to \R$
  strictly psh
 with $\psi(x)=0$ and $\psi(y)\leq -\epsilon\|z(y)\|^2$ for all $y\in K\cap B$.
 Up to replacing $\psi$ by an element of a continuous approximating sequence, we can directly
 assume that $\psi$ is continuous.
By taking a possibly smaller ball a $x$, we can also assume that
$-\epsilon\|z(y)\|^2 -\epsilon/8 \leq \psi(y)$ on $K$.
 Set 
 \[A:= \{ y \in B \colon \|z(y)\|^2 > 3/4\}
\quad \mbox{ and } \quad
V=\{y\in B\ :\ |\psi(y)+\epsilon\|z(y)\|^2|<\epsilon/4\}.
\]
By the continuity of $\psi$ and the bounds above, 
$V$ is an open subset of $B$ containing $K$.
Consider $u\in\Psh^0(X)$ 
such that $u(x)=-\epsilon/4$ and $\sup_B|u|< \epsilon/2$
(this function exists because of the assumption on $X$).
Since $K\cap B\subset V$, there also exists
$\chi\in\Ci^\infty_0(X)$ be 
such that $\chi\vert_K\equiv 1$
and $\spt\chi\cap B\subset V$.
Define the function
$v\colon X \to \R$ as
\[
v=\begin{cases}
\chi \max\{u,\psi\} & \mbox{ on } B, \\
\chi u &  \mbox{ on } X\setminus B.
\end{cases}
\]
We claim that $v= \chi u$ on $A$.
Indeed,
for every $p\in \spt \chi \cap A$, we have 
that $p \in V$, and so
$\psi(p)<-3\epsilon/4+\epsilon/4=-\epsilon/2$. Hence,
$\psi(p)< u(p)$ and $v(p)= ( \chi u)(p)$.

By construction, $v$ coincides with
$\psi$ in a neighbourhood of $x$.
It follows that $v$
 is psh in a neighbourhood  of $K$ and
  strictly psh in $x$. Therefore, we have $x\not\in\spt T$
by Lemma \ref{lmm_spsh}. This gives a contradiction with the choice of $x\in K$ and
completes the proof.
\end{proof}

\begin{propos}\label{propos-locmax-supp-new}
Let $K\subset X$ be a local maximum set.
There exists a Levi current $T$ such that $\spt T \subseteq K$.
\end{propos}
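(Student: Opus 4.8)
The plan is to reduce the statement, via Lemma~\ref{lmm_cptspp}, to the following: \emph{there exists a nonzero positive current $T$ of bidimension $(1,1)$, with compact support contained in $K$, such that $i\de\debar T=0$}; any such $T$ is then automatically a Levi current with $\spt T\subseteq K$. I would produce $T$ by a Hahn--Banach separation argument in spaces of $(1,1)$-forms, the one genuinely geometric ingredient being: \emph{a nonempty compact local maximum set admits no open neighbourhood on which a strictly plurisubharmonic function is defined.} This is a short consequence of Definition~\ref{defi_localmax}: if $u$ were strictly psh on a neighbourhood $V$ of $K$, choose $x_0\in K$ with $u(x_0)=\max_K u$ and local coordinates $z$ centred at $x_0$; for $\epsilon>0$ small, $\psi:=u-\epsilon\|z\|^2$ is strictly psh near $x_0$ and, for $y\in K$ close to $x_0$, satisfies $\psi(y)\le u(x_0)-\epsilon\|z(y)\|^2\le\psi(x_0)$ with equality only at $y=x_0$; taking $K'=\overline{B(x_0,r)}$ with $r$ small enough that $K'$ lies inside $V$ and inside the neighbourhood furnished by Definition~\ref{defi_localmax}, the maximum of $\psi$ over $K\cap K'$ equals $\psi(x_0)$ and is attained only at the interior point $x_0$, hence is strictly larger than the maximum of $\psi$ over $K\cap bK'$ --- contradicting Definition~\ref{defi_localmax}.

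Now fix a Hermitian form $\omega$ on $X$. For an open set $V$ with $K\subseteq V$ and $\overline V$ compact, work in the Fr\'echet space $E_V$ of smooth real $(1,1)$-forms on $\overline V$, and inside it consider the closed convex cone $\mathcal P_V$ of pointwise positive forms and the linear subspace $\mathcal N_V:=\{i\de\debar f:f\in\Ci^\infty(\overline V,\R)\}$, so that $\mathcal P_V+\mathcal N_V$ is a convex cone containing $0$. The key point is that $-\omega$ (restricted to $\overline V$) does not lie in the closure of $\mathcal P_V+\mathcal N_V$: otherwise there would be $f_j\in\Ci^\infty(\overline V,\R)$ and positive forms $\gamma_j$ with $i\de\debar f_j+\gamma_j\to-\omega$ in $E_V$, so that $i\de\debar(-f_j)=\omega+\gamma_j+\rho_j$ with $\rho_j\to 0$, whence $i\de\debar(-f_j)\ge\tfrac12\omega$ on $V$ for $j$ large; thus $-f_j$ would be strictly psh on the neighbourhood $V$ of $K$, contradicting the fact above. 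Since $\{-\omega\}$ and the closed convex set $\overline{\mathcal P_V+\mathcal N_V}$ are disjoint, Hahn--Banach separation yields a continuous linear functional $\ell_V$ on $E_V$ with $\ell_V(-\omega)<c\le\ell_V$ on that closure for some $c\le 0$ (as $0$ belongs to the cone). Being bounded below on the cone $\mathcal P_V+\mathcal N_V$, $\ell_V$ is $\ge 0$ there, hence $\ge 0$ on $\mathcal P_V$ and $\equiv 0$ on the subspace $\mathcal N_V$. Interpreting $\ell_V$ as a current $T_V$ of bidimension $(1,1)$ (of order $0$ by positivity, so that it extends by zero to all of $X$): $T_V$ is positive, satisfies $i\de\debar T_V=0$, is supported in $\overline V$, and has $\langle T_V,\omega\rangle=-\ell_V(-\omega)>0$; rescaling, we may assume $\langle T_V,\omega\rangle=1$.

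Finally, apply this with $V=V_m$ a decreasing sequence of neighbourhoods of $K$ with $\bigcap_m\overline{V_m}=K$ and $\overline{V_1}$ compact, obtaining positive currents $T_m$ of bidimension $(1,1)$ with $i\de\debar T_m=0$, $\spt T_m\subseteq\overline{V_m}$ and $\langle T_m,\omega\rangle=1$. These have uniformly bounded mass and are supported in the fixed compact set $\overline{V_1}$, so after passing to a subsequence they converge weakly-$\ast$ to a current $T$; then $T$ is positive, $i\de\debar T=0$, $\spt T\subseteq\bigcap_m\overline{V_m}=K$, and $\langle T,\omega\rangle=1$, so $T\neq 0$. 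Being compactly supported and satisfying conditions $1$--$4$ of Definition~\ref{def_Levic}, $T$ is a Levi current by Lemma~\ref{lmm_cptspp}, with $\spt T\subseteq K$, as required. The substance of the proof is concentrated in the geometric fact of the first paragraph --- the only place the local maximum hypothesis enters --- which is why I expect its verification to be the main point; the Hahn--Banach step and the weak-compactness passage are routine, the only care needed being the standard identification of the relevant dual spaces with spaces of currents.
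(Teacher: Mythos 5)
Your proof is correct, but it takes a more self-contained route than the paper at both of its key steps. The paper's proof invokes Sibony's duality theorem (\cite{Sib_pseudo}*{Theorem 3.1}) as a black box to get the dichotomy ``either a positive $\de\debar$-closed current supported near $K$ exists, or a smooth strictly psh function exists on a neighbourhood of $K$'', and then derives the contradiction from the strictly psh alternative by citing Slodkowski's results (\cite{Sl_pseudo}*{Corollary 1.11} and \cite{Sl}*{Theorem 4.2}) on minimal local maximum sets on which all continuous psh functions are constant, concluding that such a set would consist of isolated points. You instead (i) reprove the duality directly via Hahn--Banach separation of $-\omega$ from the closed cone $\overline{\mathcal P_V+\mathcal N_V}$, together with the exhaustion $\bigcap_m\overline{V_m}=K$ and a weak-$\ast$ compactness argument --- this is essentially the content of the cited theorem of Sibony (going back to Sullivan-type arguments), and your verification that $-\omega$ avoids the closure of the cone is exactly where the geometric input enters; and (ii) replace the Slodkowski machinery by the elementary and standard observation that a strictly psh function cannot attain its maximum over a local maximum set at a point $x_0$, via the perturbation $\psi=u-\epsilon\|z\|^2$, which directly violates Definition~\ref{defi_localmax} (note that your argument also covers the degenerate case $K\cap bK'=\emptyset$, where the right-hand side of the defining identity is $-\infty$). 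What your approach buys is a proof that depends only on Lemma~\ref{lmm_cptspp} and general functional analysis rather than on two external references; what the paper's citations buy is brevity. Both arguments hinge on the same underlying duality, and your normalization $\langle T_m,\omega\rangle=1$ correctly guarantees the limit current is nonzero, so Lemma~\ref{lmm_cptspp} applies as you claim.
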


\begin{proof}
By \cite[Theorem 3.1]{Sib_pseudo} (see also \cite[Section 4]{Sib})
and Lemma \ref{lmm_cptspp}, if there are no Levi currents supported on $K$, there exists a 
smooth strictly
psh function $u$
on some open neighbourhood $U$ of $K$. By slightly perturbing $u$, 
for every $x_0 \in K$ we can construct
$2n$ continuous strictly psh functions on some neighbourhood 
$K \subset U' \subseteq U$
such that $du_1, \dots, du_{2n}$ are linearly independent at $x_0$. This implies that,
in a neighbourhood  of $x_0$, we have
\[
\{u_1 = u_1 (x_0) \} \cap \dots \{u_{2n} = u_{2n}(x_0)\}= \{x_0\}.
\]

By \cite[Corollary 1.11]{Sl_pseudo} and \cite[Theorem 4.2]{Sl},
for every family 
 of continuous psh functions on $U'$
  there
exists a local maximum set $K'\subseteq K$ with the property that all functions of the family
are constant on $K'$. Choosing a point $x_0 \in K'$, the previous paragraph
 gives that $x_0$ is isolated in $K'$. 
 This is a contradiction, and the proof is complete.
\end{proof}

We conclude the section with the following result, that we will need to prove
Item
\eqref{item_t_curves} 
of Theorem \ref{t:main}.

\begin{lemma}\label{l:intersection}
Let $X$ be a weakly complete complex surface and $Y$ a regular level for a 
$\Ci^0$
exhaustion function $\phi$. Then, 
for all $k\geq 2$,
$\Sigma_X^k \cap Y$ is a local maximum set and, for all 
local maximum sets $K\subseteq \Sigma_X^k$,
$K$
is
 foliated by  holomorphic discs, i.e., it is
 locally a union of disjoint holomorphic discs.
\end{lemma}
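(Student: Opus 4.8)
The two assertions are of different nature, and the plan is to reduce each of them to the structure theory developed in \cite{ST} and \cite{mst}, adding only soft facts about local maximum sets. For a level $\{\phi_0=c_0\}$ of a \emph{minimal} function $\phi_0\in\Psh^k_e(X)$, with $c_0$ an attained value, the first assertion is precisely \cite{ST}*{Theorem 3.6}; the remaining point is thus to pass to an arbitrary regular level $Y=\{\phi=c\}$ of a merely continuous psh exhaustion, and to read off the holomorphic-disc structure.

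For the first assertion I would invoke the description of $\Sigma_X^k\cap Y$ for complex surfaces obtained in \cite{mst}: for $k\geq2$ this set is, when nonempty, a compact set which is locally a union of holomorphic discs. One also uses the elementary remark that any compact complex curve $C\subset X$ carries $\phi$ constant, $\phi|_C$ being psh on the compact Riemann surface $C$, so that $C$ is either contained in $Y$ or disjoint from it; thus in the non-degenerate regime of \cite{mst} the set $\Sigma_X^k\cap Y$ is genuinely a union of compact complex curves sitting inside $Y$, while in the degenerate regime $\Sigma_X^k=X$ one gets a Levi-flat hypersurface. Granting this, the local maximum property is automatic: if $K'$ lies in a flow box of the lamination and $\psi$ is strictly psh near $K'$, then the restriction of $\psi$ to each disc-leaf is strictly subharmonic, hence attains its maximum over the part of that leaf contained in $K'$ along the relative boundary of this part, which lies in $(\Sigma_X^k\cap Y)\cap bK'$; taking the supremum over the leaves gives $\max_{(\Sigma_X^k\cap Y)\cap K'}\psi=\max_{(\Sigma_X^k\cap Y)\cap bK'}\psi$, the reverse inequality being trivial.

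For the second assertion, let $K\subseteq\Sigma_X^k$ be a local maximum set, compact by definition. Slicing by the levels of $\phi$, for almost every $c$ the slice $K\cap\{\phi=c\}$ lies inside the disc-laminated set $\Sigma_X^k\cap\{\phi=c\}$, whose only transversal direction inside the real hypersurface $\{\phi=c\}$ is \emph{real}, hence not itself laminated by complex discs. The crux is a saturation statement: a local maximum set inside such a lamination must contain, together with any of its points, the whole disc-leaf through that point — heuristically, a one-sided $K$ would leave the relative boundary of a half-leaf unprotected, so that a suitable strictly psh function (obtained, as in the elementary half-disc computation, by a small strictly psh quadratic perturbation of a pluriharmonic function adapted to the leaf coordinate) would attain an interior maximum on $K$, against Definition \ref{defi_localmax}. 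Assembling the leaf-saturated slices as $c$ varies, using that the laminations of $\Sigma_X^k\cap\{\phi=c\}$ depend coherently on $c$ (again from \cite{mst}), gives that $K$ is locally a union of disjoint holomorphic discs. The real obstacle is exactly that $\Sigma_X^k$ carries no analytic structure a priori, so the lamination is available only slice by slice: the substantive input — which is what \cite{mst} supplies under the surface hypothesis and the $\Ci^k$, $k\geq2$, regularity needed for the minimal-function and level-set machinery of \cite{ST} and \cite{mst} — is the production of these laminations and their coherence, together with turning the heuristic saturation statement into a rigorous argument, presumably through the characterization of local maximum sets in \cite{Sl}.
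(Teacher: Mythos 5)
The paper's own proof of this lemma is essentially a pointer to the literature: the first assertion is \cite{mst}*{Theorem 3.2}, and the second is \cite{mst}*{Proposition 3.5}, whose content is the Assertion inside the proof of \cite{ST}*{Lemma 4.1} (which rests on Shcherbina's theorem \cite{Sh}, whence the restriction to surfaces). Your proposal correctly identifies this circle of references, but it inverts the logical order in a way that makes the argument circular. In \cite{ST} and \cite{mst} the local maximum property of $\Sigma_X^k\cap Y$ is the \emph{primitive} fact, established directly (via minimal functions and the machinery of \cite{ST}), and the lamination by holomorphic discs is then \emph{deduced} from it through Shcherbina's theorem: one needs to know that $K$ is a local maximum set before one can produce any disc through a point of $K$. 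You instead take the disc lamination of $\Sigma_X^k\cap Y$ as the given input and derive the local maximum property from it by restricting strictly psh functions to leaves. That derivation is fine as far as it goes (granting properness of the leaves in flow boxes), but the lamination you are invoking is only obtained, in the cited sources, as a consequence of the very local maximum property you are trying to prove; so as a proof of the first assertion it begs the question.

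There is a second, independent problem: you appeal to the dichotomy ``union of compact complex curves in $Y$ versus Levi-flat levels with $\Sigma_X^k=X$''. That is the structure theorem of \cite{mst}*{Theorem 1.1}, which requires a \emph{real analytic} exhaustion function; it is not available for the $\Ci^0$ (or $\Ci^k$) exhaustions of the present lemma, and indeed the whole point of the disc statement here is that only a local lamination --- not compact curves --- can be guaranteed at this level of generality. Finally, for the second assertion your ``saturation'' argument is explicitly left as a heuristic, and it addresses a different question from the one at hand: \cite{ST}*{Lemma 4.1} produces a disc through each point of an \emph{arbitrary} local maximum set contained in a hypersurface directly from the local maximum property, rather than by saturating a pre-existing lamination of $\Sigma_X^k\cap Y$ with leaves. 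So the substantive content of both assertions is precisely what your sketch defers to the references, while the portions you do argue are either circular or rely on hypotheses (real analyticity) not present in the statement.
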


\begin{proof}
The intersection $\Sigma_X^k \cap Y$ is a local maximum set
by \cite{mst}*{Theorem 3.2}. As observed in
 \cite{mst}*{Proposition 3.5}, the proof of the
 lemma is then essentially 
  given in
\cite{ST}*{Lemma 4.1}, see
in particular
the Assertion in the proof of that lemma.
\end{proof}

We point out that \cite[Lemma 4.1]{ST} 
relies on a result by Shcherbina \cite{Sh},
 which holds true only in dimension 2; this is the reason for
 restricting ourselves to the case of surfaces. it would be interesting
 to prove (or disprove) a similar statement in higher dimension.

\section{Kernels and tangent directions}
\label{s:kernels}

In this section we let $X$ be a
weakly complete complex manifold of
dimension $n$ and
assume that
 $\Psh^k(X)$ contains at least one exhaustion function $\phi:X\to\R$ for some $2\leq k\leq \infty$.
Recall that 
 the minimal kernel $\Sigma_X^k$
  of $X$ 
 is
 defined as in \eqref{eq:sigmaX}
 and the distribution $\mathcal E^k$ of $TX$
 as in \eqref{eq:e}.
We consider further the distribution
 $\mathcal{S}^k$ of $TX$ given by
\begin{equation}	\label{eq_s}
\mathcal{S}^k:=\{(x,\xi)\in TX\ :\ \xi\in T_xX,\ (i\de\debar u)_x(\xi,\xi)=0\, \forall u\in \Psh^k (X)\}\; .
\end{equation}
Similar objects have already appeared in relation to the study  of the Levi problem, see for instance
\cite{Hir} in the case of homogeneous manifolds and  \cite{ST,HST}.
We also 
set
\[
E^k_{\ell} := \{x\in X \colon\dim \mathcal{E}^k_x \geq \ell \}
\quad  \mbox{ and } \quad
S^k_1 := \{x\in X \colon\dim \mathcal{S}^k_x \geq 1\}.
\]
By definition,
$E^k_{\ell} \subseteq E^k_{\ell-1}$ and $E^k_{\ell}$ is closed in $E^k_{\ell-1}$ for all $\ell \geq 1$.
Observe moreover that $\mathcal S^k$  is a \emph{complex} distribution.

\begin{rem}
Let $T$ be a Levi current. Then, for almost every point of the support
of $T$ (with respect to the mass measure), the 
vector field associated to $T$ at $x$ belongs to the fibre
 $\mathcal S^k_x$ of $\mathcal S^k$ at $x$.
\end{rem}

\begin{propos}\label{prop_es}
We have $\mathcal{S}^k \subseteq \mathcal {E}^k$, and 
$S^k_1 = E^k_2 = E^k_1 = \Sigma_X^k$.
\end{propos}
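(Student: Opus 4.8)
The plan is to establish the chain of inclusions $\mathcal{S}^k \subseteq \mathcal{E}^k$ first, then translate this into the containments among the associated loci, and finally close the loop by showing that $\Sigma_X^k$ sits inside $S^k_1$. For the inclusion $\mathcal{S}^k \subseteq \mathcal{E}^k$, fix $(x,\xi)\in\mathcal{S}^k$ and any $u\in\Psh^k(X)$; I would argue that $(d u)_x(\xi)=0$ by a second-order/positivity argument. The point is that if $(du)_x(\xi)\neq 0$ for some direction $\xi$ in the complex line spanned by $\xi$ (here it matters that $\mathcal S^k$ is a complex distribution, so $J\xi\in\mathcal S^k_x$ as well), then one can add to $u$ a small multiple of a suitable local psh function — e.g. $u+\varepsilon|z|^2$ composed with a coordinate chart, or more robustly $u+\varepsilon\Re(\ell)+C|z|^2$ for a linear functional $\ell$ — to produce a psh function whose complex Hessian is strictly positive on $\xi$, contradicting $(x,\xi)\in\mathcal S^k$. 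Concretely: since $u$ is psh and $(i\de\debar u)_x(\xi,\xi)=0$, the restriction of $u$ to the complex line through $x$ in direction $\xi$ is a subharmonic function vanishing to second order in its Laplacian at $x$; if its gradient were nonzero there, convexity of the sublevel behaviour forces a contradiction with the degeneracy once one perturbs by a strictly psh quadratic term of arbitrarily small size. This shows $(du)_x(\xi)=0$, i.e. $(x,\xi)\in\mathcal{E}^k$.

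From $\mathcal{S}^k\subseteq\mathcal{E}^k$ we immediately get $S^k_1\subseteq E^k_1$ (indeed $S^k_1\subseteq E^k_2$, since $\mathcal S^k$ being a complex distribution means $\dim_\R\mathcal S^k_x\geq 1$ forces $\dim_\R\mathcal S^k_x\geq 2$, hence $\dim_\R\mathcal E^k_x\geq 2$). The inclusions $E^k_2\subseteq E^k_1$ are definitional. The inclusion $E^k_1\subseteq\Sigma_X^k$ is essentially immediate from the definitions: if $\dim\mathcal{E}^k_x\geq 1$, pick $0\neq v\in\mathcal{E}^k_x$; then $(d\varphi)_x(v)=0$ for all $\varphi\in\Psh^k(X)$, and in particular for any exhaustion $u\in\Psh^k_e(X)$ we have, after replacing $u$ by $u+c$ and exponentiating if needed, that $u$ and $e^u$ both kill $v$ at $x$, which together with $(du)_x(v)=(du)_x(Jv)=0$ (the distribution side is real, so I should be careful here and instead argue directly) forces $(i\de\debar u)_x$ to be degenerate at $x$ on the vector $v$ — hence $x\in\Sigma_X^k$. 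Actually the cleanest route is: $v\in\mathcal E^k_x$ and also $Jv$ need not be in $\mathcal E^k_x$ a priori, so I would instead use that $(i\de\debar e^u)_x = e^{u(x)}\big((i\de\debar u)_x + i\de u\wedge\debar u|_x\big)$; evaluating on $(v,v)$ and using $(du)_x(v)=0$ (so $\de u\wedge\debar u$ kills $(v,v)$) gives $(i\de\debar u)_x(v,v)=(i\de\debar e^u)_x(v,v)$, and then degeneracy is inherited from the behaviour of the minimal function — but the simplest statement is just that $(du)_x(v)=0$ already prevents $u$ from being strictly psh along the complexification, so $x\in\Sigma_X^k$.

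Finally, to close the cycle I would show $\Sigma_X^k\subseteq S^k_1$, which is the substantive direction and the main obstacle. Here I would invoke the minimal function $\phi_0\in\Psh^k_e(X)$ of Slodkowski–Tomassini, which fails to be strictly psh precisely on $\Sigma_X^k$: for $x\in\Sigma_X^k$ there is a nonzero $\xi\in T_xX$ with $(i\de\debar\phi_0)_x(\xi,\xi)=0$. The task is to upgrade this single degenerate direction for $\phi_0$ to a direction that is degenerate for \emph{every} $u\in\Psh^k(X)$ simultaneously, i.e. to show $\xi$ (or some nonzero vector in $\mathcal S^k_x$) lies in $\mathcal S^k_x$. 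The key is the extremality/minimality of $\phi_0$: given any $u\in\Psh^k(X)$, the function $\phi_0+\varepsilon u$ (or $\max$ with a large constant to preserve the exhaustion property, suitably regularized) lies in $\Psh^k_e(X)$ for small $\varepsilon>0$, hence is not strictly psh on $\Sigma_X^k$; combined with positivity of both Hessians and the fact that the sum of a degenerate-at-$\xi$ positive form and a positive form is strictly positive at $\xi$ unless the second is also degenerate at $\xi$, one deduces $(i\de\debar u)_x(\xi,\xi)=0$ — provided the degenerate direction of $\phi_0+\varepsilon u$ can be taken independent of $\varepsilon$, which follows from upper semicontinuity of $x\mapsto\dim\mathcal S^k_x$-type arguments and a compactness/limiting argument as $\varepsilon\to 0$. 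This last limiting step, ensuring a common degenerate direction rather than an $\varepsilon$-dependent one, is where I expect the real work to be, and it is presumably handled by appealing to the local structure results (e.g. via \cite{ST}) together with the local maximum property of $\Sigma_X^k\cap Y$ from Lemma \ref{l:intersection}.
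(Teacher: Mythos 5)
Your overall skeleton (prove $\mathcal S^k\subseteq\mathcal E^k$, then relate $E^k_1$, $E^k_2$, $\Sigma_X^k$, then close the cycle with $\Sigma_X^k\subseteq S^k_1$) matches the paper's, but three of the steps do not hold up as argued. First, for $\mathcal S^k\subseteq\mathcal E^k$: adding $\varepsilon|z|^2$ in a chart does not produce a global plurisubharmonic function on $X$, and if such a perturbation were admissible it would make the Levi form strictly positive at $x$ in \emph{every} direction, so your argument would ``prove'' $\mathcal S^k_x=\{0\}$ unconditionally; the fallback claim that a psh $u$ with $(i\de\debar u)_x(\xi,\xi)=0$ must satisfy $(du)_x(\xi)=0$ is false ($u=\Re z$ is pluriharmonic with nonvanishing differential). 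The correct mechanism, which the paper uses, is that the cone $\Psh^k(X)$ is stable under $u\mapsto e^u$ and $(i\de\debar e^u)_x(\xi,\xi)\geq e^{u(x)}|\de u_x(\xi)|^2$, so $(du)_x(\xi)\neq 0$ for some $u$ already exhibits a member of the cone with strictly positive Levi form on $\xi$. (You even quote this identity later, but deploy it for a different and unneeded purpose.) Second, your justification of $E^k_1\subseteq\Sigma_X^k$ --- that ``$(du)_x(v)=0$ already prevents $u$ from being strictly psh along the complexification'' --- is a non sequitur: $u=|z|^2$ has vanishing differential at the origin and positive definite Levi form there. The paper proves the contrapositive instead: if $x\notin\Sigma_X^k$ there is $\psi\in\Psh^k_e(X)$ strictly psh near $x$, and small compactly supported smooth perturbations of it yield $\Ci^k$ psh functions whose differentials span $T_x^*X$, whence $\mathcal E^k_x=\{0\}$.

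Third, and most importantly, the inclusion $\Sigma_X^k\subseteq S^k_1$, which you correctly single out as the substantive point, is left open. Your scheme via the minimal function $\phi_0$ and $\phi_0+\varepsilon u$ only produces, for each individual $u$, some direction degenerate for both $\phi_0$ and $u$; it does not yield a single direction degenerate for \emph{all} $u\in\Psh^k(X)$ simultaneously, and the appeal to upper semicontinuity and to Lemma \ref{l:intersection} (which concerns surfaces and intersections with level sets) does not supply the missing finite-intersection property of the degeneracy sets. The paper's argument is both simpler and complete: if for every $v$ in the unit sphere of $T_xX$ there were $\varphi_v\in\Psh^k(X)$ with $(dd^c\varphi_v)_x(v,v)>0$, then by continuity of the Levi forms and compactness of the sphere finitely many such functions would suffice, and a suitable combination of them added to the exhaustion $\phi$ would be a $\Ci^k$ psh exhaustion strictly psh at $x$, contradicting $x\in\Sigma_X^k$. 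Replacing your limiting argument by this compactness argument (and the two fixes above) would bring the proof in line with the paper's.
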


\begin{proof}
It follows
from the definition
 of $\mathcal E^k$ that $E^k_1 = E^k_2$. Moreover,
 if $(x,v)\not\in\mathcal{E}^k$,
 there exists $\psi\in\Psh^{k}(X)$ 
 such that $(d\psi)_x(v)\neq 0$; then
$$i\de\debar \exp(\psi)=\exp(\psi)i\de\debar\psi+i \exp(\psi)\de\psi\wedge\debar\psi\geq 0,$$
which implies that
 $(i\de\debar\exp(\psi))_x(v,v)\geq \exp(\psi(x))|\de\psi_x(v)|^2>0$ and so
 $(x,v)\not\in \mathcal{S}^k$. It follows that $\mathcal S^k \subseteq \mathcal E^k$.

 We now prove that $E^k_1 = \Sigma_X^k$.
If $x\not\in\Sigma_X^k$, then there exists
$\psi \in \Psh^k_e (X)$
which is strictly psh around $x$; therefore, given any
$\rho:X\to\R$ smooth with compact support near $x$, there exists $\epsilon>0$
such
that $\psi+\epsilon\rho$ is still psh.
 So, we can construct
psh functions  of class $\Ci^k$ 
whose
 differentials span the tangent space at $x$, which implies that these differentials
do not
have any nontrivial common kernel in $T_xX$. So $\mathcal{E}_x^k=\{0\}$, hence
$E_1^k \subseteq \Sigma_X^k$.

On the other hand, if $\mathcal{E}^k_x=\{0\}$, given $v_1,\ldots,v_{2n}\in T_xX$
linearly independent, we can choose
 psh functions
 $\psi_{ij}$, $i,j=1,\ldots, 2n$
 of class $\Ci^k$ and
 such that
$(d\psi_{ij})_x(v_i+v_j)\neq 0$. Therefore, the function
$\psi:=\sum_{i,j=1}^{2n} \psi_{ij}^2$
has positive defined Levi form at $x$. 
Adding to the exhaustion
 function $\phi$ suitable multiples of $\psi$, we see that 
 $x\not\in\Sigma_X^k$.
 This
gives $E_1^k \supseteq \Sigma_X^k$, hence $E_1^k=\Sigma_X^k$.

 In order to
conclude, we need to prove that
$S_1^k\supseteq \Sigma_X^k$.
Take $x \in \Sigma_X^k$  and
suppose  by contradiction 
that, for every $v\in T_xX$ there is $\varphi_v:X\to\R$
which is
 $\Ci^k$, psh, and such that $(dd^c\varphi_v)_x(v,v)>0$.
Then, as above, 
we
 can construct a  $\Ci^k$ function $\psi$
which is strictly psh
at $x$. This gives the desired contradiction
and completes the proof.
\end{proof}

The following result
gives Item \eqref{item_t_curves} of our main Theorem
\ref{t:main}.

\begin{propos}\label{t:e2}
Let $X$ be a weakly complete complex surface, $\phi$ a 
$\Ci^k$, $4\leq k\leq \infty$,
exhaustion psh function and 
$Y$ a regular connected component of a level set
$\{\phi=c\}$ 
of $\phi$.
Suppose that $U\subseteq Y$ 
is an open set in $Y$ and $U\subseteq\Sigma_X^k$. If there exists $x\in U$
 such that $\dim\mathcal{E}_x^k=2$,
  then $X$
is a union of compact complex curves.
In particular, $\Sigma_X^k=F=X=E_2^k$,
and 
$E_3^k$
 is contained in a (possibly empty) 
analytic subset of  the
 singular levels for $\phi$.
\end{propos}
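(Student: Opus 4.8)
The plan is to determine the infinitesimal structure of the minimal kernel along $U$, propagate it to an open set of $X$, and then globalise; the crux is the middle step.

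\emph{Infinitesimal structure and Levi-flatness near $x$.} Since $x\in U\subseteq\Sigma_X^k$, Proposition~\ref{prop_es} gives $\mathcal S^k_x\subseteq\mathcal E^k_x$ and, as $\mathcal S^k$ is a complex distribution and $x\in S^k_1$, $\dim_\R\mathcal S^k_x\geq 2$. Together with the hypothesis $\dim_\R\mathcal E^k_x=2$ this forces $\mathcal S^k_x=\mathcal E^k_x$, a complex line in $T_xX$. Because $\phi\in\Psh^k(X)$ and $Y$ is a regular level, $d\phi_x\neq 0$ and $d\phi_x$ vanishes on $\mathcal E^k_x$, so the latter is a complex line contained in the real hyperplane $T_xY$; in a surface the only such line is the holomorphic tangent $H_xY$, whence $\mathcal E^k_x=\mathcal S^k_x=H_xY$. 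By upper semicontinuity of $x'\mapsto\dim\mathcal E^k_{x'}$ there is a neighbourhood $W\ni x$ in $X$ with $\dim\mathcal E^k\leq 2$ on $W$; then on $\Sigma_X^k\cap W$ one has $\mathcal E^k=\mathcal S^k$, a complex line field, equal to $HY$ on $Y\cap W$ by the same computation. By Lemma~\ref{l:intersection}, $\Sigma_X^k\cap Y\supseteq U$ is foliated by holomorphic discs, and since these discs lie in $Y$ their tangent lines coincide with $HY$; hence $HY$ is integrable on $U\cap W$, i.e.\ $Y$ is Levi-flat there, its leaves being complex curves contained in $\Sigma_X^k$ and tangent to $\mathcal S^k$.

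\emph{From a Levi-flat level to a holomorphic foliation of an open set --- the main obstacle.} I would next show that a neighbourhood $W'\ni x$ in $X$ lies in $\Sigma_X^k$ and carries a holomorphic foliation by complex curves restricting on $Y\cap W'$ to the Levi-flat leaves. The hypothesis $k\geq 4$ enters here, ensuring that $\mathcal S^k$ is of class $\Ci^2$, so that Frobenius-type and holomorphic-foliation arguments apply. The mechanism I expect is that a Levi-flat level of a psh \emph{exhaustion} whose leaves lie in the minimal kernel is rigid: the nearby levels $\{\phi=c'\}$, $|c'-c|$ small, are forced to be Levi-flat near $x$ as well, so that as $c'$ varies the leaves sweep out $W'$ and, by the holomorphic structure of $\Sigma_X^k\cap\{\phi=c'\}$ furnished by \cite{mst} (cf.\ Lemma~\ref{l:intersection}), assemble into a holomorphic submersion $\pi\colon W'\to\Delta$ with complex-curve fibres; equivalently $i\de\debar\phi_0=\pi^*\omega_0$ on $W'$ for the minimal function $\phi_0$ and a positive form $\omega_0$ on $\Delta$. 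Since $\pi$ is a submersion onto a curve, $\ker(i\de\debar\phi_0)$ contains the fibre direction everywhere on $W'$, so $i\de\debar\phi_0$ is degenerate on all of $W'$ and hence $W'\subseteq\Sigma_X^k$; moreover $\mathcal S^k$ then equals the (one-dimensional) fibre distribution on $W'$, so every $u\in\Psh^0(X)$ has $i\de\debar u$ degenerate along the fibres, i.e.\ $u$ is pluriharmonic on each fibre. This is the delicate point --- upgrading Levi-flatness of a single level to a genuine holomorphic foliation of an open set of $X$ --- and it is exactly where the exhaustion hypothesis and the surface structure theory of \cite{mst,ST} are essential.

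\emph{Globalisation and the remaining assertions.} A standard argument using the compactness of the level sets of $\phi$ and the structure theory of \cite{mst} shows that the local leaves of $\pi$ extend to compact complex curves, on which every continuous psh function, being harmonic on a compact set, is constant. Let $G\subseteq X$ be the set of points lying on such a curve; integrating over one of them (after normalisation, via Lemma~\ref{lmm_cptspp}) produces a Levi current supported on it, so $G\subseteq F\subseteq\Sigma_X^k$, and $G\neq\emptyset$ by the previous step. Stability of foliations by compact curves makes $G$ open, and a limiting argument in the (Barlet--Bishop) cycle space, using the uniform volume bound coming from the compactness of the level sets of $\phi$, makes $G$ closed, with constancy of continuous psh functions passing to the limit; since $X$ is connected, $G=X$, so $X$ is a union of compact complex curves. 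Consequently $F=X$, hence $\Sigma_X^k=X$ and, by Proposition~\ref{prop_es}, $E_2^k=X$. Finally, over a regular value of $\phi$ the foliation by these curves is locally a holomorphic submersion onto a Riemann surface with the curves as fibres, so locally every psh function is a function of the base alone and $\dim\mathcal E^k=2$ identically; therefore $E_3^k$ maps into the critical values of $\phi$, i.e.\ lies in the singular levels, where it coincides with the degeneracy locus (singular and multiple fibres) of a global model of $\pi$, an analytic set by the usual semicontinuity for proper holomorphic maps. This completes the proof.
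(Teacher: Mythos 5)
Your opening step is correct and matches the spirit of the paper: from $x\in\Sigma_X^k=S^k_1$ and $\dim_\R\mathcal E^k_x=2$ you rightly deduce $\mathcal S^k_x=\mathcal E^k_x=H_xY$, and the identification $\mathcal E^k=\mathcal S^k=HY$ on $E_2^k\setminus E_3^k$ is exactly the infinitesimal input the argument needs. But the core of your proof --- the passage from Levi-flatness of the single level $Y$ near $x$ to a holomorphic foliation of an open neighbourhood $W'$ in $X$, and then to compact leaves --- is not a proof: it is announced (``I would next show'', ``the mechanism I expect'') and rests on several unestablished claims, namely that nearby levels $\{\phi=c'\}$ are forced to be Levi-flat, that the resulting leaves assemble into a holomorphic submersion $\pi\colon W'\to\Delta$, and that the local leaves close up into compact curves via a cycle-space limiting argument. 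None of these is routine, and the proposal gives no argument for any of them. This is a genuine gap, and it sits precisely at the point you yourself identify as ``the main obstacle''.

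The gap is also avoidable, because the route through nearby levels of $\phi$ is not needed. The hypothesis $\dim\mathcal E^k_x=2$ hands you a second function $\psi\in\Psh^k(X)$ with $(d\psi)_x$ independent of $(d\phi)_x$, hence $\psi\vert_Y$ non-constant; this is the datum your proof never exploits. For a regular value $b$ of $\psi\vert_Y$ close to $\psi(x)$ (Sard, using $k\geq4$), the set $C_b=\{y\in Y:\psi(y)=b\}$ is a \emph{closed} subset of the compact level $Y$, hence compact, and at every $y\in C_b\cap\Sigma_X^k$ one has $T_yC_b=\mathcal E^k_y=\mathcal S^k_y$, so $C_b\cap\Sigma_X^k$ is a complex curve; a connectedness argument using the holomorphic discs of Lemma~\ref{l:intersection} shows $C_b\subseteq\Sigma_X^k$. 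Varying $b$ gives uncountably many disjoint compact complex curves, and Nishino's theorem (Theorem~\ref{t_nishino}) then produces a meromorphic map $h\colon X\to R$ with compact fibres, so $X$ is a union of compact curves --- no foliation of an open set of $X$, no stability theory, no Barlet--Bishop space. The final assertion on $E_3^k$ then follows by pulling back perturbations of a strictly psh exhaustion of $R$ through $h$ at points where $h'\neq0$, showing $E_3^k\subseteq\{h'=0\}$; your version of this step, phrased via a local submersion over regular values of $\phi$, is again only sketched and does not by itself produce the required \emph{analytic} subset of the singular levels.
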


We will need the
 following theorem by Nishino, see \cite{Ni}*{Proposition 9 and Théorème II}
and \cite{MT}*{Section 2.2.1}.

\begin{teorema}[Nishino]
\label{t_nishino}
Let $X$ be a weakly complete or compact surface 
that contains an uncountable family $\mathcal F$ 
of disjoint connected compact complex curves.
Then there exist a Riemann surface $R$ and a meromorphic map $h \colon X\to R$ with compact fibers.
\end{teorema}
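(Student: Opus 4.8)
The plan is to show that a neighbourhood of $x$ is swept out by compact complex curves, extract from these an uncountable family of pairwise disjoint ones, and then feed this family into Nishino's Theorem \ref{t_nishino} to obtain the global fibration; the "in particular" assertions will then follow by soft arguments.

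First I would reduce the hypothesis to a statement about a complex line field. By Proposition \ref{prop_es} we have $\Sigma_X^k = E^k_2 = S^k_1$ and $\mathcal{S}^k \subseteq \mathcal{E}^k$, so on the open set $U\subseteq\Sigma_X^k$ we have $\dim_{\R}\mathcal{E}^k\geq 2$ and $\dim_{\C}\mathcal{S}^k\geq 1$. Since $y\mapsto \dim\mathcal{E}^k_y$ is upper semicontinuous and equals $2$ at $x$, there is an open (in $Y$) neighbourhood $U'\subseteq U$ of $x$ on which $\dim_{\R}\mathcal{E}^k=2$; together with $\dim_{\R}\mathcal{S}^k\geq 2$ and $\mathcal{S}^k\subseteq\mathcal{E}^k$ this forces $\mathcal{E}^k=\mathcal{S}^k$ to be a complex line field on $U'$. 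As $\mathcal{E}^k\subseteq\ker d\phi = TY$ and the only complex line contained in $T_pY$ is the complex tangent $T_p^{\C}Y$, I get $\mathcal{E}^k_p=\mathcal{S}^k_p=T_p^{\C}Y$ on $U'$; in particular $Y$ is Levi-flat along $U'$, and by Lemma \ref{l:intersection} the set $U'$ is foliated by the holomorphic disc-leaves tangent to $\mathcal{S}^k$. The key extra output is that, since $\mathcal{E}^k_p=T_pL$ for the leaf $L$ through $p$, the differential of every $u\in\Psh^0(X)$ vanishes along each leaf, so \emph{every} continuous psh function is constant on each leaf, and $\phi\equiv c$ there.

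Next I would upgrade the disc-leaves to compact curves, which is the main obstacle. Extending each disc to its maximal leaf $L$, the inclusion $\mathcal{S}^k\subseteq\ker d\phi$ keeps $L$ inside the compact level $\{\phi=c\}$, and all psh functions, being constant on $L$, are constant on $\overline L$ by continuity. I would then argue that $\overline L$ is a compact complex curve: at each of its points where $\dim\mathcal{E}^k=2$ it contains the holomorphic leaf of the foliation and has complex-line tangent space $\mathcal{E}^k$, so $\overline L$ is $2$-real-dimensional and complex-analytic off the thin closed set $E^k_3$, across which it extends analytically, compactness being inherited from $\{\phi=c\}$. The delicate point is precisely this promotion of a closed leaf to a genuine compact analytic curve and the exclusion of non-closed (e.g.\ dense) leaves; here I would exploit the constancy of all psh functions along leaves, the compactness of $\{\phi=c\}$, and the regularity ($k\geq 4$) of the disc foliation, noting that a non-closed leaf would force $\overline L$ to carry the full $3$-dimensional structure of $U'$, contradicting $\dim\mathcal{E}^k=2$. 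Since $U'$ is a $3$-real-dimensional set foliated by these $2$-dimensional leaves, a normal-families/continuity argument on the discs yields an open, hence uncountable, family $\{L_p\}_{p\in U'}$ of pairwise disjoint compact complex curves.

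Finally, applying Nishino's Theorem \ref{t_nishino} to this uncountable disjoint family produces a Riemann surface $R$ and a meromorphic $h\colon X\to R$ with compact fibres; these fibres are compact complex curves and their union fills $X$, proving that $X$ is a union of compact complex curves. For the remaining assertions: on each fibre $C$ every psh function is constant, so $T_yC\subseteq\mathcal{E}^k_y$ and $\dim_{\R}\mathcal{E}^k_y\geq 2$ for all $y\in X$, i.e.\ $X=E^k_2=\Sigma_X^k$ by Proposition \ref{prop_es}. The current of integration $[C]$ is positive of bidimension $(1,1)$, closed (hence $\de\debar$-closed) and compactly supported, and $\langle [C]\wedge i\de\debar u,1\rangle=\int_C i\de\debar(u|_C)=0$ since $u|_C$ is constant; thus $[C]$ is a Levi current by Lemma \ref{lmm_cptspp}, so $C\subseteq F$, and as the fibres cover $X$ we get $F=X$ (the reverse inclusion $F\subseteq\Sigma_X^k$ is already known). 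Lastly, $E^k_3=\{\dim\mathcal{E}^k\geq 3\}$ is exactly the locus where the fibration degenerates, namely the singular fibres and the indeterminacy set of $h$, which is an analytic subset; it is contained in the singular levels of $\phi$ because on a regular level the single leaf through a point already accounts for $\dim\mathcal{E}^k=2$ and no further common-kernel direction can survive.
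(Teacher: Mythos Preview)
Your proposal does not address the stated result. Theorem~\ref{t_nishino} is Nishino's theorem, which the paper does not prove but simply quotes from \cite{Ni} and \cite{MT}; there is no ``paper's own proof'' to compare against. What you have written is instead an attempted proof of Proposition~\ref{t:e2}, which \emph{uses} Nishino's theorem as a black box. Assuming that was your intention, here is the comparison.

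The decisive idea in the paper's proof of Proposition~\ref{t:e2}, which your argument lacks, is to turn the hypothesis $\dim\mathcal{E}^k_x=2$ into a \emph{second} psh function. Since $\dim\mathcal{E}^k_x=2<3=\dim T_xY$, there exists $\psi\in\Psh^k(X)$ with $(d\psi)_x$ linearly independent from $(d\phi)_x$; hence $\psi|_Y$ is nonconstant near $x$. The paper then takes regular values $b$ of $\psi|_Y$ near $\psi(x)$ (Sard, and this is precisely where $k\ge 4$ is used) and considers $C_b=\{y\in Y:\psi(y)=b\}$. Each $C_b$ is \emph{automatically compact}, being a closed subset of the compact level $Y$, and is a smooth real $2$-manifold; one checks $T_yC_b=\mathcal{E}^k_y=\mathcal{S}^k_y$ on $C_b\cap\Sigma_X^k$, so the tangent is a complex line there, and Lemma~\ref{l:intersection} is used to show $C_b\subseteq\Sigma_X^k$. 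Thus $C_b$ is a compact complex curve outright, and varying $b$ gives the uncountable disjoint family fed into Nishino.

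You instead try to follow leaves of the Levi foliation of $U'$ and argue that their closures are compact complex curves. This is exactly the ``delicate point'' you flag, and the justification you offer does not work: a leaf closure $\overline L$ can be a genuine lamination (a closed union of leaves) without being $3$-dimensional and without contradicting $\dim\mathcal{E}^k=2$ anywhere; nothing in your argument rules out $\overline L$ being, say, a Cantor transversal of leaves. You never introduce the auxiliary $\psi$, so you have no mechanism to trap a leaf inside a compact $2$-dimensional set, and you give no real role to the hypothesis $k\ge 4$. (A smaller slip: from $T_pL=\mathcal{E}^k_p$ you conclude that every $u\in\Psh^0(X)$ is constant on $L$, but $\mathcal{E}^k$ is defined via $\Psh^k$, not $\Psh^0$.)

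For the final assertion on $E^k_3$, your claim that it is ``where the fibration degenerates'' is too vague. The paper argues concretely that $E^k_3\subseteq\{h'=0\}$ by pulling back small perturbations of a strictly psh exhaustion of $R$ through $h$; where $h'(x)\neq 0$ this produces two independent differentials of psh functions at $x$, forcing $\dim\mathcal{E}^k_x\le 2$.
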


\begin{proof}[Proof of Proposition \ref{t:e2}]
By Theorem \ref{t_nishino}, to prove the first assertion it is enough to show
that $X$ contains uncountably many disjoint compact complex curves.

Since $x\in U$ is such that $\dim\mathcal{E}_x^k=2$, 
 there exists $\psi \in \Psh_e^k (X)$
 such that $(d\phi)_x$ and $(d\psi)_x$ 
 are linearly independent;
 hence,
the map $\psi\vert_Y:Y\to\R$
 is not constant.
Since $k\geq 4$, by Sard's theorem
 we can find regular values $b$ for $\psi$ arbitrarily close
 to $b_0:=\psi(x)$, therefore the sets $C_b=\{y\in Y\ :\ \psi(y)=b\}$
 intersect the open set $U\subseteq \Sigma_X^k$.

For any $y\in C_b\cap \Sigma_X^k$, by Proposition \ref{prop_es}
we have $T_yC_b=\mathcal{E}^k_y =\mathcal S^k_y$.
 Therefore, $C_b\cap\Sigma_X^k$ is a complex curve, being a real, smooth 
 $2$-dimensional manifold with complex tangent space.
On the other hand, the set  $C_b\setminus\Sigma_X^k$ is open in $C_b$. Let $z\in C_b$
be a boundary point (with respect to $C_b$); 
 as $z\in\Sigma_X^k$, by Lemma \ref{l:intersection}
 there is a holomorphic disc $f:\mathbb{D}\to X$
such that $f(\mathbb{D})\subset Y$ and $f(0)=z$.
 If $\zeta\in\mathbb{D}$ is close enough to $0$,
then, setting $w=f(\zeta)$, we have $w\in\Sigma_X^k$, and
$(d\phi)_w$ and $(d\psi)_w$ independent. This gives 
$w\in E_2^k\setminus E_3^k$, which in turn implies that 
$\mathcal{E}_w^k =T_wC_{\psi(w)}$. Therefore $f(\mathbb{D})$ coincides locally with a leaf $C_{b'}$. Hence $C_b$
 is contained in $\Sigma_X^k$, so it is a compact complex curve.

As $b$ was taken arbitrarily  among the regular values close enough to $b_0$, we find uncountably many 
disjoint (since they correspond to distinct values)
compact complex curves in $X$, 
as desired. 

In order to conclude, we need to prove the final assertion on $E_3^k$.
We proved above
 that there exists a meromorphic map $h\colon X\to R$ with compact fibres, where
 $R$ is Riemann surface. It is enough to prove that
 $E_3^k \subseteq \{h'=0\}$.
 
 Let $x\in X$ be such that $h'(x)\neq 0$.
Consider a strictly psh 
exhaustion function $\psi$ for $R$
 (which we can assume to be $\Ci^\infty$ near $x$ by \cite{Ri})
 and the family of functions
$\mathcal F :=\{ \psi + \epsilon \rho\}$, where 
$\rho$ is a smooth function compactly supported near $h(x)$. For every such $\rho$,
$\psi + \epsilon \rho$ is still strictly psh for $\epsilon$ sufficiently small. Thus, 
we can obtain
a set of generators 
for the tangent space given by
differentials at $h(x)$ of psh functions in $\mathcal F$.
Pre-composing the corresponding functions with $h$, we obtain that the space
of differentials at $x$
 of psh  
  functions on $X$ has dimension at least $2$. Hence, $x \notin E_3^k$, and the
 proof is complete.
\end{proof}

\begin{rem}
 Suppose that $X$ is a surface
 and $Y$ a regular level for an
exhaustion function $\phi\in \Psh^0_e(X)$.
Let 
$K\subseteq Y\cap\Sigma_X$ be a local maximum set. 
By Lemma \ref{l:intersection}, 
$Y$ and $K$ are foliated by holomorphic discs.
For every such disk,  its tangent bundle is exactly the restriction of
$\mathcal S$. By \cite{BrSib}*{Theorem 1.4}, there exists
 a $\de\debar$-closed
positive current of bidimension $(1,1)$, directed by $\mathcal S$, supported in $K$.
By Lemma \ref{lmm_cptspp}, such current is a Levi current. This gives a different proof
of Item \eqref{item_t_KY} when $\dim X=2$.
\end{rem}

\section{End of the proof of Theorem \ref{t:main}}\label{s:proof_main}

It follows
from Corollary \ref{corol_spsh} (or Lemma \ref{lmm_spsh})
 that
 $\spt T\subseteq \Sigma_X^k$ for every Levi current $T$
 and all $k\geq 0$.
Thus, we have $F \subseteq \Sigma_X^k$ for all $k\geq 0$. 
 Moreover,
Items \eqref{item_t_locmax}, \eqref{item_t_KY}, and
  \eqref{item_t_curves}
 follow from Propositions
  \ref{lemma:levi_local_max}, \ref{propos-locmax-supp-new}, and
  \ref{t:e2}, respectively.

Let now $Y$ be a regular 
connected component of a  level set for an
exhaustion psh function $\phi\in \Psh^k_e(X)$ for some $k\geq  2$.
The remaining item
follows
from the next proposition.

\begin{propos}\label{prop-levels-K}
 If $k\geq 2$  and 
 $Y\subseteq \Sigma_X^k$, 
 there exists $c'<c$ such that
 the connected component of $\phi^{-1}([c',c])$
 containing $Y$ is contained in $F$.
 \end{propos}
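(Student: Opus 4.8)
The plan is to use the disintegration of Corollary \ref{cor_disint} together with Item \eqref{item_t_KY} of the main theorem (i.e.\ Proposition \ref{propos-locmax-supp-new}), and to exploit the fact that, by Lemma \ref{l:intersection} (in dimension $2$) or by \cite{mst,ST} more generally, the intersection $\Sigma_X^k \cap Y$ of the minimal kernel with a level set is a local maximum set. First I would apply Proposition \ref{propos-locmax-supp-new} to the local maximum set $K := \Sigma_X^k \cap Y = Y$ (using the hypothesis $Y \subseteq \Sigma_X^k$, which forces equality): this produces a Levi current $T$ with $\spt T \subseteq Y$, and since $Y$ is compact we are in the setting of a compactly supported Levi current, so $Y \cap F \neq \emptyset$, giving the "in particular" clause immediately. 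The real content is the first assertion, that an entire collar $\phi^{-1}([c',c])$ around $Y$ lies in $F$.

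For that, the idea is to produce, for each value $c''$ slightly below $c$, a Levi current supported on the level set $\{\phi = c''\}$ meeting the relevant connected component. The natural mechanism is again Proposition \ref{propos-locmax-supp-new}, so the crux is to show that $\Sigma_X^k$ contains a non-trivial local maximum set inside $\{\phi=c''\}$ for all $c''$ in some interval $[c',c]$. Here I would argue that $Y \subseteq \Sigma_X^k$ propagates downward: if $Y$ were, on a whole neighbourhood in $\phi^{-1}([c',c])$, eventually outside $\Sigma_X^k$, then one could build a psh function strictly psh near $Y$, contradicting $Y \subseteq \Sigma_X^k$ via the minimal function $\phi_0$ of \cite{ST} — more precisely, a strictly psh bump supported near the collar, added to $\phi$, would make some point of $Y$ a point where all exhaustion functions can be made strictly psh. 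This uses the openness of $X \setminus \Sigma_X^k$ and the characterization $\Sigma_X^k = E_1^k$ from Proposition \ref{prop_es}: the set of directions along which no psh function has positive Levi form cannot suddenly disappear one side of $Y$ while $Y$ itself is totally degenerate, because psh functions strictly psh on one side would, by a standard gluing ($\max$ construction, as in the proof of Proposition \ref{lemma:levi_local_max}), contribute strict positivity on $Y$.

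Concretely, the key step I expect to be the main obstacle is establishing that the minimal kernel $\Sigma_X^k$ actually reaches down to levels $c'' < c$, i.e.\ that $\Sigma_X^k \cap \phi^{-1}([c',c])$ has connected component through $Y$ that is "fat" in the $\phi$-direction, rather than just equal to $Y$ itself. I would address this by contradiction: suppose the connected component $W$ of $\Sigma_X^k \cap \phi^{-1}([c',c])$ containing $Y$ equals $Y$ for every $c' < c$; then $Y$ is isolated in $\Sigma_X^k$ as a connected component, and one can find a neighbourhood $V$ of $Y$ with $V \cap \Sigma_X^k = Y$. On $V \setminus Y$ every point admits a psh function (restriction of an element of $\Psh_e^k$) strictly psh there; patching these with a $\max$ against $\phi$ and a cutoff, exactly as in Proposition \ref{lemma:levi_local_max}, yields a psh function on a neighbourhood of $Y$ that is strictly psh at some point of $Y$ — contradicting $Y \subseteq \Sigma_X^k = E_1^k$. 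Hence $W$ strictly contains $Y$, so it contains $\phi^{-1}([c',c]) \cap (\text{component of }Y)$ for some $c' < c$; and since $W \subseteq \Sigma_X^k$ is a local maximum set (by \cite{ST}, the minimal kernel is a local maximum set, and a connected component of a local maximum set is again one), Proposition \ref{propos-locmax-supp-new} supplies a Levi current on each piece, whence the whole collar lies in $F$. I would close by remarking that an alternative, cleaner route in the surface case is to invoke Lemma \ref{l:intersection} to foliate $Y$ by holomorphic discs and then use \cite{BrSib}*{Theorem 1.4} to directly build the directed closed positive current, but the argument above has the advantage of working in all dimensions where $\Sigma_X^k \cap \{\phi = c''\}$ is known to be a local maximum set.
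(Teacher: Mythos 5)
Your strategy diverges from the paper's and, as written, it has two genuine gaps. First, your main mechanism --- applying Proposition \ref{propos-locmax-supp-new} to (putative) local maximum sets inside the levels $\{\phi=c''\}$ --- only produces a Levi current whose support is \emph{contained in} that set; it shows that $F$ \emph{meets} each level, not that the level (let alone the whole collar) is \emph{contained} in $F$. Whether every point of a local maximum set lies in the support of some Levi current is exactly the open question raised in the Remark following Theorem \ref{t:main}, so no argument routed exclusively through Proposition \ref{propos-locmax-supp-new} can close it. Second, the step ``$W$ strictly contains $Y$, so it contains $\phi^{-1}([c',c])$ intersected with the component of $Y$ for some $c'<c$'' is a non sequitur: a connected subset of $\Sigma_X^k$ strictly larger than $Y$ need not contain a full collar. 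Your isolation/patching argument (a $\max$ of $\phi$ against functions that are strictly psh only \emph{off} $Y$) also does not deliver strict positivity \emph{at} a point of $Y$; Levi forms can degenerate exactly on $Y$, and the $\max$ construction in Proposition \ref{lemma:levi_local_max} works there only because the failure of the local maximum property supplies a strictly psh function peaking \emph{at} a point of the set. The propagation of degeneracy to a full collar below $Y$ is a genuinely nontrivial input, which the paper imports from \cite{ST}*{Theorem 3.9}: there is $c'<c$ with $(dd^c\phi)^2\equiv 0$ on $K=\phi^{-1}([c',c])$.

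The paper's actual proof then bypasses the abstract existence result entirely. It takes the explicit current $T=i\partial\phi\wedge\bar{\partial}\phi$, observes that $i\partial\bar{\partial}T=-(\partial\bar{\partial}\phi)^2=0$ on the interior of $K$, so that $T$ is a Levi current there, and disintegrates it along the levels of $\phi$ via Corollary \ref{cor_disint} (the measure $\mu$ being absolutely continuous since $\phi\in\Ci^2$). Because $d\phi\neq 0$ on the regular levels, each slice $T_s$ has support equal to the \emph{entire} level $\{\phi=s\}$ and is a Levi current by Lemma \ref{lmm_cptspp}; the closedness of $F$ then yields the whole collar. If you want to salvage your approach, you must replace Proposition \ref{propos-locmax-supp-new} by a construction that controls the support of the current from below --- the explicit current $i\partial\phi\wedge\bar{\partial}\phi$ is precisely that.
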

\begin{proof}
We 
 assume for simplicity that the level $\{\phi=c\}$ is regular and connected, the argument is similar
otherwise.
Since $k\geq 2$, by \cite{ST}*{Theorem 3.9} there is $c'<c$ such that, setting
$$K=\{x\in X\ :\ c'\leq \phi(x)\leq c\},$$
the
form
$(dd^c\phi)^2$ 
vanishes
 on the interior of
 $K$, hence on $K$.
So, we have $K \subseteq \Sigma_X^k$.

 Consider the current $T$ given by
 \[
 T:= i\de\phi\wedge\debar\phi.
\] 
It is clear that $T$ is
a current of bidimension $(1,1)$,
positive and
 directed by the complex subspace of the tangent
of the levels of $\phi$. Moreover, $i\de\debar T$ is induced by 
the form
\[i\de\debar
 (i\de\phi\wedge\debar\phi)
=
-(\de\debar\phi)^2.
\]
So, $i\de\debar T$
 vanishes
 where $\phi$ is not strictly psh, hence on $\Sigma_X^k$. 
 Let $B$ be
the interior of $K$, then the restriction of $T$ to $B$ is a current of bidimension $(1,1)$, positive,
$\de\debar$-closed (in $B$); moreover, given $u\in \Psh^0(X)$, we have that
$T\wedge\de\debar u=0$ on $\Sigma_X^k$, so $T$ is a Levi current.

By construction and Lemma \ref{lemma_vanish}
we have $T\wedge \de\phi=0$, so we
can disintegrate $T$ along the levels of $\phi$, see Corollary \ref{cor_disint}:
there exist 
currents $T_s$ with $s\in (c',c)$, such that, for $\alpha$ a $2$-form with $\spt \alpha\subset B$, 
$$\langle T,\alpha\rangle=\int_{c'}^c\langle T_s,\alpha\rangle\, d\mu(s)
\quad
\mbox{ for some measure } \mu \mbox{ on } (c',c).$$ 
Since 
$\phi\in \Ci^2$,
the measure $\mu$ is absolutely
continuous with respect to the Lebesgue measure on $(c',c)$.

As $T$ is $\de\debar$-closed in $B$, so is $\mu$-almost every $T_s$ in $B$;
therefore, for a dense open set of $s\in (c',c)$, $T_s$ is
a positive, $\de\debar$-closed current of bidimension $(1,1)$ and
$$\spt T_s=\{x\in X\ :\ \phi(x)=s\}.$$
The set in the RHS is compact since $\phi$ is an exhaustion function. 
By Lemma \ref{lmm_cptspp}, $T_s$
 is a Levi current.

In conclusion, the level set $\{x\in X\ :\ \phi(x)=s\}$ 
is contained in $F$ for almost all $s\in (c',c)$, so $\phi^{-1}([c',c])\subseteq F$, 
as $F$ is closed. In particular, $Y\subseteq F$.
\end{proof}

The proof of Theorem \ref{t:main} is complete.

\section{Real analytic exhaustion function}\label{s:analytic}

A classification of 
those
  weakly complete complex surfaces
 $X$
  admitting
   an analytic exhaustion function is given in \cite{mst}. As a direct consequence, we can get an analogous
   complete classification of the possible Levi currents in this setting.
   
   First notice that each exceptional divisor $V$ in $X$
    corresponds to an extremal Levi current given by the current of integration $[V]$. 
    Without loss of generality, to simplify our next statement, we
    can thus assume that $X$ has no such divisors on the regular levels of $\alpha$.
     The statement for a general  $X$ is then a direct consequence.

\begin{teorema}
Let $X$ be a weakly complete complex surface admitting an 
analytic exhaustion function $\alpha$. Assume that $X$ has no exceptional divisors on the regular levels of $\alpha$.
Then one of the following possibilities hold:
\begin{enumerate}
\item $X$ is Stein (and so, admits no Levi currents);
\item $F=\Sigma_X^{\infty} = X= \cup V_i$, where all the $V_i$ are (disjoint) connected
compact curves, and all extremal Levi currents are of the form $\lambda [V_i']$ for some positive $\lambda$, with $V_i'$ an irreducible component of some $V_i$;
\item $F=\Sigma_X^{\infty} =X$, every regular level $Y_c$ of $\alpha$ is foliated by curves $U_i$, and 
the support of any extremal Levi currents on $Y_c$ is equal to 
(a connected component of) $Y_c$.
\end{enumerate}
\end{teorema}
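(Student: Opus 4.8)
The plan is to invoke the classification of weakly complete complex surfaces with real analytic exhaustion function from \cite{mst} and run each of its cases through the machinery developed in the preceding sections of this paper. By \cite{mst}, after removing the exceptional divisors sitting on regular levels of $\alpha$ (which, as noted, we may), such an $X$ is either Stein, or a ``Grauert-type'' surface whose minimal kernel $\Sigma_X^\infty$ equals $X$, with the regular levels $Y_c$ either being (unions of) compact curves or being foliated by Levi-flat pieces / holomorphic curves. I would organize the proof as a trichotomy matching items (1)--(3).

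\medskip

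\emph{Case (1): $X$ Stein.} Here $\Psh^0(X)$ contains a strictly psh exhaustion function, so by Corollary \ref{corol_spsh} every point fails to lie in the support of any Levi current; hence there are no Levi currents at all. \emph{Case (2): $X = \bigcup V_i$ a union of disjoint connected compact curves.} First, $\Sigma_X^\infty = X$ and $F = X$ by Item \eqref{item_t_curves} of Theorem \ref{t:main} (or directly: each $[V_i']$ is a closed positive $(1,1)$-current with compact support, $\de\debar$-closed, so a Levi current by Lemma \ref{lmm_cptspp}), so the first equalities are clear. For the classification of \emph{extremal} Levi currents: given any Levi current $T$, disintegrate it along $\alpha$ via Corollary \ref{cor_disint} to reduce to the compactly supported case; a compactly supported Levi current $T$ has $\spt T$ contained in some finite union of the $V_i$, and being positive, $\de\debar$-closed, and of bidimension $(1,1)$ on a (union of) compact curve(s), standard structure theory for such currents writes $T = \sum \lambda_j [V_j']$ over irreducible components, so the extremal ones are exactly the $\lambda [V_i']$. \emph{Case (3): $X$ a Grauert-type surface whose regular levels $Y_c$ are Levi-flat and foliated by holomorphic curves $U_i$ (as in \cite{mst}).} Again $\Sigma_X^\infty = X = F$ follows from \cite{mst} together with Proposition \ref{prop-levels-K} (the levels lie in $F$). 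The substantive point is to identify the supports of extremal Levi currents on a fixed regular level $Y_c$: on one hand, by Corollary \ref{cor_disint} and Corollary \ref{corol_spsh}, any Levi current supported on $Y_c$ is $j_* S_c$ for a $\de\debar$-closed positive current $S_c$ on the real $3$-manifold $Y_c$ directed by the Levi foliation (this is the content of the Remark following Proposition \ref{prop_es} and of Corollary \ref{corol_spsh}, since $\mathcal S^k_y = T_y U_i$ along $Y_c$); on the other hand, by \cite{BrSib}*{Theorem 1.4} such a directed current exists and, when $Y_c$ (i.e.\ the relevant connected component) carries no transverse invariant measure supported on a proper closed saturated subset, it must be supported on all of the connected component of $Y_c$ — so the extremal ones have support equal to a connected component of $Y_c$.

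\medskip

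\emph{The main obstacle} I anticipate is Case (3): pinning down exactly why the support of an extremal Levi current on $Y_c$ must be a \emph{whole} connected component rather than a proper closed saturated subset of the foliation. This requires knowing that the Levi foliation on the regular levels, as produced by \cite{mst}, is minimal (every leaf dense) on each connected component, or at least that it carries no proper closed saturated set supporting a transverse invariant measure; this should be extractable from the structure results in \cite{mst} describing these levels (e.g.\ as quotients or covers with linear/irrational holonomy), but it is the step where one genuinely has to open up \cite{mst} rather than cite a formal statement. A secondary, more routine point is the passage from ``$T$ a Levi current on $X$'' to ``$T$ supported on a single level'', which is handled by the disintegration Corollary \ref{cor_disint} together with the observation that extremality of $T$ forces the disintegration to be concentrated on a single value of $\alpha$ (otherwise $T$ is a nontrivial average); this, combined with the regularity statement in Corollary \ref{cor_disint} for regular values of $\alpha\in\Psh^1$, reduces everything to the level-wise analysis above, modulo treating the (measure-zero) set of singular levels, where the contribution is either a current of integration over a compact curve (absorbed into Case (2)-type pieces, which we have excluded by hypothesis on $\alpha$'s regular levels) or otherwise accounted for by the structure theorem of \cite{mst}.
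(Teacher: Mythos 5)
Your overall architecture coincides with the paper's: both proofs start from the trichotomy of \cite{mst}*{Theorem 1.1} (Stein; proper over a complex curve; regular levels foliated by complex curves) and run each case through the results of the earlier sections, with \cite{BrSib}*{Theorem 1.4} providing existence in the foliated case. Where you genuinely diverge is in the decisive step of Case (3), identifying the support of a Levi current living on a regular level $Y_c$. The paper argues: the support is a local maximum set by Proposition \ref{lemma:levi_local_max}; a local maximum set contained in a Levi-flat hypersurface is a union of leaves of the Levi foliation by \cite{MZ}*{Lemma 3.3}; and the leaves are dense. You instead argue through directedness: the current is directed by $\mathcal{S}^\infty$, which along $Y_c$ is the tangent to the Levi foliation, and the support of a positive $\de\debar$-closed directed current is saturated, so minimality of the foliation forces full support. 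Both routes work; yours requires the local structure theorem for positive harmonic currents directed by a foliation by Riemann surfaces (to see that the support is a union of leaves), while the paper's route gets the same conclusion from the soft local-maximum-set machinery already set up in Section \ref{section_locmax}. The point you flag as the ``main obstacle'' --- minimality of the foliation on the regular levels --- is in fact not an obstacle: the third alternative of \cite{mst}*{Theorem 1.1} is stated precisely as ``the connected components of the regular levels are foliated with \emph{dense} complex curves'', so density comes for free with the citation and there is no need to open up \cite{mst}. Finally, your treatment of Case (2) (disintegration, reduction to compactly supported currents, and the support theorem writing a positive $\de\debar$-closed bidimension-$(1,1)$ current on a compact curve as $\sum_j\lambda_j[V_j']$) is more detailed than the paper's, which disposes of Cases (1) and (2) with a one-line appeal to Section \ref{s:levi}; your extra care there is warranted rather than redundant.
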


Observe also that, although a priori we would only have  $\Sigma_X^k \subseteq \Sigma_X^\infty$ for all $k\geq 0$,
 the above geometric description implies that
 $\Sigma_X^k = \Sigma_X^\infty$
 for all $k\geq 0$.

\begin{proof}
It follows from \cite{mst}*{Theorem 1.1} that
one of the following possibilities holds:
\begin{enumerate}
\item $X$ is a Stein space;
\item $X$ is proper over a (possibly singular) complex curve;
\item the connected components of the regular levels of $\alpha$
are foliated with dense complex curves.
\end{enumerate}
In the first and second cases, the assertion
 follows from the characterization of Levi currents given in Section \ref{s:levi}.
In the third case, a Levi current can be constructed, for instance, by means of \cite{BrSib}*{Theorem 1.4}. 
By proposition \ref{lemma:levi_local_max}, the support of any Levi current
is a local maximum set.
By \cite{MZ}*{Lemma 3.3}, a local maximum set contained in a Levi-flat hypersurface must be a union of leaves of the Levi foliation.

 Hence, in the third case, 
any Levi current on a regular level set of the exhaustion function
 is supported on the whole level set, as all the leaves of the Levi foliation are dense.
This in particular applies to extremal Levi currents. The proof is complete.
\end{proof}

\begin{bibdiv}
\begin{biblist}

\bib{BrSib}{article}{
   author={Berndtsson, Bo},
   author={Sibony, Nessim},
   title={The $\overline\partial$-equation on a positive current},
   journal={Inventiones Mathematicae},
   volume={147},
   date={2002},
   number={2},
   pages={371--428},
   issn={0020-9910},
   doi={10.1007/s002220100178},
}

\bib{DS}{article}{
  title={Pull-back of currents by holomorphic maps},
  author={Dinh, Tien-Cuong},
  author={Sibony, Nessim},
  journal={Manuscripta Mathematica},
  volume={123},
  number={3},
  pages={357--371},
  year={2007},
  publisher={Springer},
  doi={10.1007/s00229-007-0103-5},
}

\bib{Gra}{article}{
  author={Grauert, Hans},
  title={On Levi's problem and the imbedding of real-analytic manifolds},
  journal={Annals of Mathematics},
  volume={68},
  number={2},
  pages={460--472},
  year={1958},
  doi={10.2307/1970257},
}

\bib{HST}{article}{
  title={On defining functions for unbounded pseudoconvex domains},
  author={Harz, Tobias}
  author={Shcherbina, Nikolay},
  author={Tomassini, Giuseppe},
  journal={Mathematische Zeitschrift},
  volume={286}, 
  pages={987--1002},
  year={2017},
  doi={10.1007/s00209-016-1792-9},
}

\bib{Hir}{article}{
  title={Le probleme de L{\'e}vi pour les espaces homogenes},
  author={Hirschowitz, Andr{\'e}},
  journal={Bulletin de la Soci{\'e}t{\'e} Math{\'e}matique de France},
  volume={103},
  pages={191--201},
  year={1975},
  doi={10.24033/bsmf.1801},
}

\bib{M}{article}{
	author={Mongodi, S.},
	title={Weakly complete domains in Grauert type surfaces},
	journal={Annali di Matematica Pura ed Applicata (1923 -)},
	volume={198},
  number={4},
  pages={1185--1189},
  year={2019},
  doi={10.1007/s10231-018-0814-0},
	}

\bib{MZ}{article}{
author={Mongodi, S.}, 
author={Slodkowski, Z.},
title={Domains with a continuous exhaustion in weakly complete surfaces},
 journal={Mathematische Zeitschrift},
 volume={296}, 
 pages={1011–1019},
 year={2020}, 
 doi={10.1007/s00209-020-02466-z},
 }

\bib{crass}{article}{
author={Mongodi, Samuele},
   author={Slodkowski, Zbigniew},
   author={Tomassini, Giuseppe},
title = {On weakly complete surfaces},
journal = {Comptes Rendus Mathematique},
volume = {353},
number = {11},
pages = {969 -- 972},
year = {2015},
doi = {10.1016/j.crma.2015.08.009},
}

  \bib{mst}{article}{
     author = {Mongodi, Samuele},
     author = {Slodkowski, Zbigniew},
		author = {Tomassini, Giuseppe},
      title = {Weakly complete complex surfaces},
    journal = {Indiana University Mathematics Journal},
     volume = {67},
       year = {2018},
     number = {2},
      pages = {899 -- 935},
      doi={10.1512/iumj.2018.67.6306},
}

	\bib{mst2}{article}{
author = {Mongodi, Samuele},
     author = {Slodkowski, Zbigniew},
		author = {Tomassini, Giuseppe},
title = {Some properties of Grauert type surfaces},
journal = {International Journal of Mathematics},
volume = {28},
number = {8},
pages = {1750063 (16 pages)},
year = {2017},
doi = {10.1142/S0129167X1750063X},
}

\bib{MT}{incollection}{
  title={Minimal kernels and compact analytic objects in complex surfaces},
  author={Mongodi, Samuele},
  author={Tomassini, Giuseppe},
  booktitle={Advancements in Complex Analysis},
  pages={329--362},
  year={2020},
 doi={10-1007/978-3-030-40120-7{\_}9},
}

\bib{Na1}{article}{
  title={The Levi problem for complex spaces},
  author={Narasimhan, Raghavan},
  journal={Mathematische Annalen},
  volume={142},
  number={4},
  pages={355--365},
  year={1961},
  doi={10.1007/BF01451029},
}

\bib{Na2}{article}{
  title={The Levi problem for complex spaces II},
  author={Narasimhan, Raghavan},
  journal={Mathematische Annalen},
  volume={146},
  number={3},
  pages={195--216},
  year={1962},
  doi={10.1007/BF01470950},
}

\bib{Ni}{article}{
   author={Nishino, Toshio},
   title={L'existence d'une fonction analytique sur une vari\'{e}t\'{e} analytique
   complexe \`a deux dimensions},
   language={French},
 journal={Publications of the Research Institute for Mathematical Sciences},
   volume={18},
   date={1982},
   number={1},
   pages={387--419},
   issn={0034-5318},
   doi={10.2977/prims/1195184029},
}

\bib{OS}{article}{
  title={Bounded psh functions and pseudoconvexity in K{\"a}hler manifold},
  author={Ohsawa, Takeo},
  author={Sibony, Nessim},
  journal={Nagoya Mathematical Journal},
  volume={149},
  pages={1--8},
  year={1998},
  doi={10.1017/S0027763000006516},
}

\bib{Ri}{article}{
  title={Stetige streng pseudokonvexe Funktionen},
  author={Richberg, Rolf},
  journal={Mathematische Annalen},
  volume={175},
  number={4},
  pages={257--286},
  year={1967},
  doi={10.1007/BF02063212},
}

\bib{R}{article}{
  title={The local maximum modulus principle},
  author={Rossi, Hugo},
  journal={Annals of Mathematics},
  volume={72},
  number={1},
  pages={1--11},
  year={1960},
  doi={10.2307/1970145},
}

\bib{Sh}{article}{
  title={On the polynomial hull of a graph},
  author={Shcherbina, Nikolay},
  journal={Indiana University Mathematics Journal},
  volume={42},
  number={2},
  pages={477--503},
  year={1993},
  doi={10.1512/iumj.1993.42.42022},
}

\bib{Sib_pf}{article}{
  title={Pfaff systems, currents and hulls},
  author={Sibony, Nessim},
  journal={Mathematische Zeitschrift},
  volume={285},
  number={3-4},
  pages={1107--1123},
  year={2017},
  publisher={Springer},
  doi={10.1007/s00209-016-1740-8},
}

\bib{Sib}{article}{
	title={Levi problem in complex manifolds},
  author={Sibony, Nessim},
  journal={Mathematische Annalen},
  date={2018},
  volume={371},
  pages={1047--1067},
	doi={10.1007/s00208-017-1539-x},
}

\bib{Sib_pseudo}{article}{
  title={Pseudoconvex domains with smooth boundary in projective spaces},
  author={Sibony, Nessim},
  journal={Mathematische Zeitschrift},
  year={2020},
  publisher={Springer},
  doi={10.1007/s00209-020-02613-6},
}

\bib{Sl}{article}{
  title={Local maximum property and q-plurisubharmonic functions in uniform algebras},
  author={Slodkowski, Zbigniew},
  journal={Journal of mathematical analysis and applications},
  volume={115},
  number={1},
  pages={105--130},
  year={1986},
  doi={10.1016/0022-247X(86)90027-2},
}

\bib{Sl_pseudo}{incollection}{
  title={Pseudoconcave decompositions in complex manifolds},
  author={Slodkowski, Zbigniew},
  booktitle={Contemporary Mathematics, Advances in Complex geometry}
  volume={31},
  pages={239--259},
  year={2019},
  doi={10.1090/conm/735/14829},
}

\bib{ST}{article}{
    author={Slodkowski, Zibgniew},
   author={Tomassini, Giuseppe},
   title={Minimal kernels of weakly complete spaces},
   journal={Journal of Functional Analysis},
   volume={210},
   date={2004},
   number={1},
   pages={125--147},
   doi={10.1016/S0022-1236(03)00182-4},
}
  \end{biblist}
\end{bibdiv}

\end{document}